\documentclass[11pt]{article}
\usepackage[english]{babel}
\usepackage[document]{ragged2e}
\usepackage[skip=10pt plus1pt, indent=40pt]{parskip}
\usepackage{longtable}
\usepackage[top=0.75in, bottom=0.5in, left=0.8in, right=0.8in,includefoot,heightrounded]{geometry}
\usepackage[utf8]{inputenc}
\usepackage[procnames]{listings}
\usepackage{pdflscape}
\usepackage{upgreek}
\usepackage{fancyhdr}

\usepackage{mathtools}
\usepackage{dsfont}
\usepackage{amssymb, amsmath, amsbsy,amsthm,amscd}
\usepackage{bigints}
\usepackage{stmaryrd}
\usepackage{cases}
\usepackage{accents}

\newtheorem{defn0}{Definition}[section]
\newtheorem{prop0}[defn0]{Proposition}
\newtheorem{thm0}[defn0]{Theorem}
\newtheorem{lemma0}[defn0]{Lemma}
\newtheorem{corollary0}[defn0]{Corollary}
\newtheorem{example0}[defn0]{Example}
\newtheorem{remark0}[defn0]{Remark}
\newtheorem{conjecture0}[defn0]{Conjecture}
\newtheorem{ansatz0}[defn0]{Ansatz}
\newtheorem{notation0}[defn0]{Notation}

\newenvironment{definition}{ \begin{defn0}}{\end{defn0}}
\newenvironment{proposition}{\bigskip \begin{prop0}}{\end{prop0}}
\newenvironment{theorem}{\bigskip \begin{thm0}}{\end{thm0}}
\newenvironment{lemma}{\bigskip \begin{lemma0}}{\end{lemma0}}

\newenvironment{remark}{ \bigskip \begin{remark0}}{\end{remark0}}

\newcommand{\secref}[1]{Section~\ref{#1}}

\mathtoolsset{showonlyrefs}

\numberwithin{equation}{section}
\numberwithin{figure}{section}

\usepackage{overpic}
\usepackage{subcaption}
\usepackage{graphicx}

\usepackage{orcidlink}
\usepackage{hyperref}
\definecolor{Cerulean}{RGB}{29,172,214}
\definecolor{JungleGreen}{cmyk}{0.99,0,0.52,0}
\hypersetup{
    colorlinks=true,
    linkcolor = black,
    citecolor = Cerulean,
    filecolor=magenta,      
    urlcolor=blue,
}

\usepackage{authblk}

\makeatletter

\newcommand{\tpitchfork}{%
  \vbox{
    \baselineskip\z@skip
    \lineskip-.52ex
    \lineskiplimit\maxdimen
    \m@th
    \ialign{##\crcr\hidewidth\smash{$-$}\hidewidth\crcr$\pitchfork$\crcr}
  }%
}
\makeatother

\usepackage{biblatex}
\title{Breaking of invariant curves: from the Fermi-Ulam map to the breathing circle billiard}

\author[1]{Jos\'e Lamas\thanks{Corresponding author. E-mail: \texttt{lamasrodriguezjose@dlut.edu.cn}} \orcidlink{0000-0002-1809-1823}}
\author[2]{Stefano Mar\`o\thanks{E-mail: \texttt{marostefano@uniovi.es}} \orcidlink{0000-0003-1194-1052}}
\affil[1]{School of Mathematical Sciences, Dalian University of Technology, P.R. China}
\affil[2]{Facultad de Ciencias, Universidad de Oviedo, Oviedo, Spain}

\date{}
\begin{document}
\justifying
\maketitle

\begin{abstract}
We consider the breathing circle billiard, in which a point particle moves freely inside a disk. The radius varies periodically in time, with elastic reflections at the moving boundary. In this system the angular momentum is preserved, and fixing its value $c$ reduces the dynamics to a two-dimensional exact symplectic map on a cylinder. In the high-energy regime this map is a twist map generated by a diagonally periodic generating function $h_c$. 

We study the small angular momentum regime as a perturbation of the limiting case $c=0$, which corresponds to the Fermi-Ulam dynamics along a diameter. Using this perturbative structure and a quantitative version of Mather's converse-KAM criterion, we exclude invariant Lipschitz graphs for suitable rotation numbers. Combined with Aubry-Mather theory and Forni's theorem, this yields positive topological entropy for sufficiently small $c\geq0$. Our result improves previous similar results obtained via the standard Mather's converse-KAM criterion by giving a sharper quantitative threshold for the destruction of invariant curves.
\end{abstract}

\section{Introduction}\label{sec:intro}

A mathematical billiard with moving boundary is a region of the plane whose boundary changes with time. A point particle moves freely inside this region and reflects elastically at the moving boundary. Since the boundary can exchange energy with the particle, the kinetic energy is not conserved at reflections. A natural question is therefore whether the successive bounces can produce instability phenomena, such as chaotic motion or unbounded growth of energy.

This question traces back to the Fermi-Ulam model~\cite{Fermi1949, Ulam1961}, where one studies the motion of a particle between two periodically moving walls. The answer depends strongly on the regularity of the motion of the wall. When the motion is sufficiently smooth, invariant curves appear at high energy and prevent acceleration~\cite{LaederichLev1991}. On the other hand, when the motion has low regularity, one can construct trajectories whose energy grows unbounded~\cite{Zharnitsky1998} (see also~\cite{Maro2014, Maro2013} for related results in impact systems).

Time-dependent billiards may be viewed as natural generalizations of the Fermi-Ulam model~\cite{GelfreichRomKedarTuraev2012,KoillerMarkarianKamphorstCarvalho1995}. In this broader setting, the geometry of the moving boundary plays an essential role. For instance, in time-dependent domains close to ellipses, one can find accelerating orbits together with the phenomena of splitting of separatrices~\cite{DettmannFainTuraev2018}. The situation is different for the breathing circle billiard. In this case, sufficiently smooth motions of the boundary produce invariant curves at high energy and therefore bounded energy growth~\cite{OliffsonKamphorstCarvalho1999}. This regular behavior, however, does not rule out the coexistence of chaotic dynamics, as shown in~\cite{BonannoMaro2022}. More recently, stability, global dynamics and Aubry-Mather sets for the breathing circle billiard have been further investigated in~\cite{CaoMaYuTanRenQi2022, ZhangLiLiu2025,ZhangXieLiCaoGrebogi2022}. 

Among time-dependent billiards, the breathing circle billiard is one of the simplest models in which the effect of a moving boundary can be studied beyond the one-dimensional setting. It is defined by
\[\mathcal D_t := \{x\in \mathds R^2\colon |x|<R(t)\},\]
where $R$ is strictly positive and $1$-periodic. The radial motion of the boundary allows the particle to exchange energy with the billiard table, while angular momentum is preserved due to the circular symmetry. Thus, after fixing the angular momentum $c \ge 0$, the dynamics reduces to a two-dimensional map of the cylinder.

This reduction makes the breathing circle billiard closely related to exact symplectic twist maps. In suitable high-energy coordinates, the reduced map admits a generating function which satisfies a twist condition (see, for instance,~\cite{BonannoMaro2022, KunzeOrtega2011}). The case $c=0$ is degenerate from the billiard point of view but remains dynamically meaningful: the particle moves along a diameter and the reduced dynamics coincides with the Fermi-Ulam map~\cite{Ulam1961}. Therefore the breathing circle billiard can be viewed, for small angular momentum, as a natural two-dimensional continuation of the Fermi-Ulam model.

The exact symplectic twist structure provides a natural variational framework for the study of these maps. Indeed, exact symplectic twist maps are described by generating functions, and their orbits correspond to stationary points of an action functional. Within this setting, Aubry-Mather theory gives minimizing invariant sets with prescribed rotation number~\cite{Bangert1988, Mather1991, MatherForni1994}. For irrational rotation numbers, such a set is either an invariant Lipschitz graph or a Cantor set. This alternative is central in what follows: when invariant graphs are excluded, Forni's theorem yields invariant ergodic probability measures with positive metric entropy, and hence also positive topological entropy~\cite{Forni1996} (see also~\cite{Angenent1992, Angenent1990} for related results).

The exclusion of invariant graphs belongs to converse-KAM theory~\cite{Haro1999, MacKayMeissStark1989,MacKayPercival1985} and the conditions under which it is guaranteed strongly depends on the involved converse-KAM criterion. In this paper, we focus on its variational form~\cite{Bangert1988, Mather1982}. More precisely, given an invariant curve $\Gamma$ and its associated Birkhoff map $\varphi(x)$ (see Lemma \ref{lemma: birk}), Mather proved that
\begin{equation}\label{mather_breaking}
a(x):=h_{22}(\varphi^{-1}(x),x)+h_{11}(x,\varphi(x))>0 \qquad \forall x\in\mathds R.
\end{equation}
This criterion has been used, among other contexts, in impact models and in the breathing circle billiard in~\cite{BonannoMaro2022,Maro2020}. 

In this paper, we use a quantitative refinement of \eqref{mather_breaking} due to \cite{Maro2020}, where an explicit function $L(x)$ is obtained such that
\begin{equation}\label{mather_breaking_maro}
a(x)>L(x)>0 \qquad \forall x\in\mathds R.
\end{equation}
We shall use this sharper estimate to improve the result of~\cite{BonannoMaro2022}. As already mentioned, the result concerning the breathing circle billiard is obtained as a perturbation of the Fermi-Ulam case. Since both models are of intrinsic interest, we state the results separately: Theorem \ref{thm: chaos FU} is devoted to the Fermi-Ulam map, while Theorem \ref{thm: chaos} concerns the breathing circle billiard map.

The paper is organized as follows. The formal statement of our main results and a comparison with \cite{BonannoMaro2022} is given in Section \ref{subsec: main results} below.  In~\secref{sec: generating function}, we consider the generating function of the Fermi-Ulam map and the ``reduced'' breathing circle billiard map with angular momentum $c > 0$. The connection between the two is given in Lemma \ref{lem: C2 convergence}. The proof of the main results is given in ~\secref{sec: chaos}. We first prove Theorem~\ref{thm: chaos FU} and, subsequently, we show how Lemma \ref{lem: C2 convergence} allows us to extend the result to the breathing circle billiard, proving Theorem~\ref{thm: chaos}.

\subsection{Main results}\label{subsec: main results}

Let $R\colon\mathds R\to\mathds R$ be a $C^2$, strictly positive and $1$-periodic function. For each $t\in\mathds R$, we consider the disk
\begin{equation}\label{eq: admissible domain}
\mathcal D_t:=\left\{x\in\mathds R^2: |x|<R(t)\right\}.
\end{equation}
A unit-mass particle moves freely in $\mathcal D_t$ and reflects elastically on the moving boundary. Fixing an angular momentum $c\geq0$, we denote by $\mathcal P_c$ the corresponding ``reduced'' breathing circle billiard map. For the case of null angular momentum the map reduces to the Fermi-Ulam map, denoted by $\mathcal P_0$.  The precise formulations of $\mathcal P_c$ and $\mathcal P_0$ are recalled in Section~\ref{sec: generating function}.

\begin{definition}\label{def: function R}
Let $R\in C^2(\mathds R)$ be strictly positive and $1$-periodic. Denote by $\|\cdot\|$ the $\sup$-norm. Set
\[\underline R := \min_{t\in\mathds R} R(t),\qquad\overline R := \max_{t\in\mathds R} R(t).
\]
We define
\begin{equation}\label{eq: sigma}
\sigma_B:=\min\left\{\frac{\underline R}{2\|\dot R\|},\frac{2\, \sqrt{1+\sqrt{1-\varepsilon^2}}\, \underline{R}}{\sqrt{\| \frac{d^2}{dt^2} R^2 \|}} \right\}, \qquad \sigma_0:=\frac{\underline R}{\|\dot R\|}
\end{equation}
with the convention that $\underline R/\|\dot R\|=+\infty$ if $\|\dot R\|=0$ and $\varepsilon\in(0,1)$ is a fixed parameter.
%
%

We say that $R$ belongs to the class $\mathcal R_B$ or $\mathcal R_0$ if $\sigma_B>2$ or $\sigma_0>2$, respectively. 

We say that $R$ belongs to the class $\widetilde{\mathcal R}_B$ or $\widetilde{\mathcal R}_0$ if $R\in\mathcal R_B$ or $R\in\mathcal R_0$ respectively and there exist $\bar t\in\mathds R$ and a constant $\kappa_R>0$ such that
\begin{equation}\label{eq: kappaR}
R(\bar t)=\overline R,\qquad \ddot R(\bar t)<-\kappa_R,
\end{equation}
and 
    \begin{equation}\label{eq: sharpened criterion}
    \kappa_R>\inf\frac{\omega+1}{2(\overline R+\underline R)}\left(\frac{8\overline R^2}{(\omega-1)^3}-A_{\mathrm{low}}(\omega,R)\right)>0,
    \end{equation}
    where the infimum is taken for $\omega\in(1,\sigma_B-1)$ or $\omega\in(1,\sigma_0-1)$, respectively. For $R\in\mathcal R_0$ and $\omega\in(1,\sigma_0-1)$ the function $\omega\mapsto A_{\mathrm{low}}(\omega,R)$ is defined in~\eqref{eq: Alow 0} and is continuous, strictly positive and vanishes as $\omega\to 0^+$. \\ 
        Note that the inclusions $\mathcal R_B\subset \mathcal R_0$ and $\widetilde{\mathcal R}_B\subset \widetilde{\mathcal R}_0$ come from the definition.   
    
%
\end{definition}

We state the main results of this paper.

\begin{theorem}\label{thm: chaos FU}
If $R\in\widetilde{\mathcal R}_0$, then the map $\mathcal P_0$ has positive topological entropy.
\end{theorem}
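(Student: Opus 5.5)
We plan to follow the variational route outlined in the introduction: write $\mathcal P_0$ in high-energy coordinates as an exact symplectic twist map of the cylinder with generating function $h_0$. Then show that for some irrational rotation number $\omega$ no invariant Lipschitz graph with that rotation number exists. Aubry--Mather theory then gives, for that $\omega$, a minimizing Cantor set rather than a curve. Forni's theorem then yields an ergodic invariant measure with positive metric entropy, hence positive topological entropy. Everything therefore reduces to a converse-KAM statement for suitable $\omega$.

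\textbf{Setting.} First we fix the setting. In the Fermi--Ulam coordinates (impact time $t$ and an energy-like variable), the condition $\sigma_0>2$ will be used to guarantee that the generating function $h_0(t_0,t_1)$ is defined and $C^2$ on a diagonal strip $\{t_1-t_0\in(1,\sigma_0-1)\}$. It should also satisfy the twist condition $\partial_{12}h_0<0$ there. The strip is chosen so that orbits with rotation number $\omega\in(1,\sigma_0-1)$ live in the region where $h_0$ is defined. The quantity $A_{\mathrm{low}}(\omega,R)$ in \eqref{eq: Alow 0} should be the explicit lower bound $L$ from the quantitative criterion \eqref{mather_breaking_maro} of \cite{Maro2020}. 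Concretely, for an invariant graph of rotation number $\omega$ with Birkhoff map $\varphi$, we have $\varphi(x)-x$ close to $\omega$, and $a(x)=h_{22}(\varphi^{-1}(x),x)+h_{11}(x,\varphi(x))$ is bounded below by a quantity like $A_{\mathrm{low}}(\omega,R)$ (up to the normalising factor appearing in \eqref{eq: sharpened criterion}).

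\textbf{Computing $a(x)$.} We write $h_0$ explicitly: the particle travels a chord of length $R(t_0)+R(t_1)$ in time $t_1-t_0$. The action is then essentially $h_0=(R(t_0)+R(t_1))^2/(2(t_1-t_0))$ up to sign conventions. Differentiating twice gives
\[h_{11}(x,y)+h_{22}(z,x) = \frac{(R(x)+R(y))\ddot R(x)+\dot R(x)^2}{y-x} + \frac{(R(z)+R(x))\ddot R(x)+\dot R(x)^2}{x-z} + \text{terms of order } \frac{\overline R^2}{(y-x)^3}.\]
We evaluate this at $x=\bar t$, the maximum point of $R$, where $\dot R(\bar t)=0$ and $\ddot R(\bar t)<-\kappa_R$. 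Using $\varphi(\bar t)-\bar t\approx\omega+1$-type bounds from the strip and $R(\cdot)+R(\bar t)\le \overline R+\underline R$-type estimates, the curvature terms contribute at most $-2\kappa_R(\overline R+\underline R)/(\omega+1)$. The cubic terms are bounded above by $8\overline R^2/(\omega-1)^3$. Hence
\[a(\bar t)\le \frac{8\overline R^2}{(\omega-1)^3}-\frac{2\kappa_R(\overline R+\underline R)}{\omega+1}.\]
Hypothesis \eqref{eq: sharpened criterion} says exactly that for some $\omega\in(1,\sigma_0-1)$ the right-hand side is smaller than $A_{\mathrm{low}}(\omega,R)$. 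This contradicts $a>L$. By continuity of both sides in $\omega$, the strict inequality persists on an open interval of $\omega$, so we may pick $\omega$ irrational.

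\textbf{Main obstacle.} The hard step is this bookkeeping of signs and bounds. We must make sure the bounds on $\varphi(\bar t)-\bar t$ and $\bar t-\varphi^{-1}(\bar t)$ in terms of $\omega$ (from the Birkhoff/Lipschitz graph property, Lemma \ref{lemma: birk}) really give the factors $(\omega\pm1)$ as written. We must also make sure the $\dot R^2$ terms vanish or have the favourable sign at $\bar t$. If the crude estimate $|\varphi(x)-x-\omega|<1$ is not enough, we would instead use monotonicity and periodicity of $\varphi$ to pin these differences more precisely.
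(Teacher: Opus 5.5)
The architecture matches the paper's: Aubry--Mather sets for $\omega\in(1,\sigma_0-1)$, the localization $\omega-1\le\varphi(x)-x\le\omega+1$ from Lemma~\ref{lemma: birk} (which is already sufficient), an upper bound on $a_0$ at $\bar t$, an open interval of $\omega$ with $A_{\mathrm{up}}<A_{\mathrm{low}}$, an irrational $\omega$ in it, and Forni. Your upper bound is exactly Lemma~\ref{lem: Aup0}.

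\textbf{The gap is the lower bound.} You only say that $A_{\mathrm{low}}$ ``should be'' the function $L$ of \cite{Maro2020}, ``up to a normalising factor''. But $A_{\mathrm{low}}(\omega,R)$ is the explicit quantity $2b_{\mathrm{low}}(\omega)^2/H_{\mathrm{up}}(\omega)$, and you never show $a_0(x)>A_{\mathrm{low}}(\omega,R)$ along an invariant graph. This is exactly where the improvement over Mather's criterion lives, so it cannot be taken as a black box. The missing argument is:
\begin{enumerate}
\item Let $b_*,b^*$ be the min and max of $-\partial_{12}h_0$, and $H$ the sum of the maxima of $|\partial_{11}h_0|$ and $|\partial_{22}h_0|$, over $\{\omega-1\le t_1-t_0\le\omega+1\}$.
\item Then $a_0(x)/b_0(\varphi_0(x))$ and $a_0(x)/b_0(x)$ are both at most $H/b_*$, and $b_0(\varphi_0(x))/b_0(x)\ge b_*/b^*$.
\item So Theorem~\ref{thm: CKAM-maro} applies with $B^\pm=B:=H/b_*$ and $C^\pm=C:=\min\{b_*/b^*,B^2/8\}$; the cap $B^2/8$ guarantees $B^2-4C>0$.
\item With equal constants, $D^-=1/D^+=2/(B+\sqrt{B^2-4C})>1/B$. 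Hence $a_0\ge 2b_*D^->2b_*^2/H$.
\item From the explicit second derivatives with $2\underline R\le S\le 2\overline R$, you get $H\le H_{\mathrm{up}}(\omega)$. Using $\omega+1<\sigma_0$ to ensure $S/\tau-\|\dot R\|>0$, you also get $b_*\ge b_{\mathrm{low}}(\omega)>0$.
\end{enumerate}
There is no normalising factor: $A_{\mathrm{low}}$ bounds $a_0$ directly. The factor $(\omega+1)/(2(\overline R+\underline R))$ in \eqref{eq: sharpened criterion} only appears when $A_{\mathrm{up}}<A_{\mathrm{low}}$ is solved for $\kappa_R$.

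\textbf{Two smaller points.}
\begin{itemize}
\item In the upper bound you need $S_\pm\ge\overline R+\underline R$, not $\le$, because $\ddot R(\bar t)<0$. The $\dot R^2$ and $\pm 2S\dot R/\tau^2$ terms simply vanish since $\dot R(\bar t)=0$.
\item $\mathcal P_0$ is a twist map only on $\Sigma_0$, and $h_0$ lives on $0<t_1-t_0<\sigma_0$, not on $1<t_1-t_0<\sigma_0-1$; the condition $\sigma_0>2$ only makes the range of rotation numbers nonempty. To invoke Aubry--Mather theory you must extend $h_0$ off a compact strip $\beta\le t_1-t_0\le\sigma_0-\beta$ to a global twist generating function (Lemma~\ref{lem: extension}). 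You then use bounded deviation of minimizers (Lemma~\ref{lem: increment}) to see that the Mather set for $\omega\in(1,\sigma_0-1)$ lies where $\widetilde h_0=h_0$, so it consists of genuine $\mathcal P_0$-orbits.
\end{itemize}
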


\begin{theorem}\label{thm: chaos}
If $R\in\widetilde{\mathcal R}_B$, then there exists $c_0>0$ such that, for every $c\in(0,c_0)$, the map $\mathcal P_c$ has positive topological entropy.
\end{theorem}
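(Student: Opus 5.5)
The plan is to treat $\mathcal P_c$ as a $C^2$-small perturbation of $\mathcal P_0$ at the level of generating functions. We will run the quantitative converse-KAM argument used for Theorem~\ref{thm: chaos FU} uniformly in $c$, and then conclude with Aubry--Mather theory and Forni's theorem.

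\textbf{Step 1: fix a rotation number and a region.} Since $R\in\widetilde{\mathcal R}_B$, the strict inequality \eqref{eq: sharpened criterion} lets us pick $\omega^*\in(1,\sigma_B-1)$ with
\[
\kappa_R>\frac{\omega^*+1}{2(\overline R+\underline R)}\Bigl(\frac{8\overline R^2}{(\omega^*-1)^3}-A_{\mathrm{low}}(\omega^*,R)\Bigr).
\]
The strictness leaves a positive margin $\delta>0$, and by continuity in $\omega$ it persists on a small interval $I$ around $\omega^*$. We choose an irrational rotation number $\rho$ so that the associated minimizing configurations, and hence any invariant Lipschitz graph of that rotation number, lie in the high-energy strip where the twist structure is valid. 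This strip corresponds to the range of the generating-function variable controlled by $\omega\in I$; we use $\sigma_B>2$ so that the strip is nonempty for both $h_0$ and $h_c$.

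\textbf{Step 2: the Fermi--Ulam estimate as a function of the generating function.} In the proof of Theorem~\ref{thm: chaos FU}, the sharpened inequality \eqref{mather_breaking_maro}, $a(x)>L(x)$, is contradicted at the point $x=\bar t$ where $R$ attains $\overline R$ and $\ddot R(\bar t)<-\kappa_R$. The quantities involved are $h_{11}$ and $h_{22}$ evaluated along the graph, together with bounds on $h_{12}$ and on the width of the twist interval, which produce $8\overline R^2/(\omega-1)^3$ and $A_{\mathrm{low}}$. We will write that argument abstractly: we show that the contradiction persists for any generating function $h$ whose $C^2$ distance from $h_0$ on the relevant compact fundamental domain is less than some $\eta(\delta)>0$. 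Every term on both sides of $a>L$ depends continuously on $h_{11},h_{22},h_{12}$, and the twist constant enters $L$ continuously, so the margin $\delta$ absorbs errors of order $\eta$.

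\textbf{Step 3: apply the perturbation lemma.} By Lemma~\ref{lem: C2 convergence}, $h_c\to h_0$ in $C^2$ on compact subsets of the twist domain as $c\to0^+$, uniformly in the diagonal-periodic direction. Hence there is $c_0>0$ such that $\|h_c-h_0\|_{C^2}<\eta$ for every $c\in(0,c_0)$. The second entry of $\sigma_B$ is there to keep the twist condition and the lower bound $A_{\mathrm{low}}$ uniform for $h_c$: it controls $\frac{d^2}{dt^2}R^2$, which appears once $c>0$.

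\textbf{Step 4: conclude.} For such $c$, $\mathcal P_c$ has no invariant Lipschitz graph with rotation number $\rho$. Aubry--Mather theory then gives a minimizing Cantor set of rotation number $\rho$, and Forni's theorem \cite{Forni1996} gives an ergodic measure with positive metric entropy. By the variational principle, $\mathcal P_c$ has positive topological entropy.

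\textbf{Main obstacle.} The hard part is Step 2. We must make sure that the invariant graph's Birkhoff map $\varphi$, which depends on $c$, stays in a compact set where the $C^2$ convergence holds. We also need $L(x)$ to be controlled by quantities that depend continuously on $h$, not only on $R$. To handle this, we will first derive a priori bounds on $\varphi(x)-x$ from the twist and the rotation number, uniform for $h$ near $h_0$, and then use them to confine everything to a compact domain.
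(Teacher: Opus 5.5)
Your proposal is correct and follows essentially the same route as the paper. You choose an irrational rotation number in an open interval where $\kappa_R>F(\omega)$ (equivalently $A_{\mathrm{up}}<A_{\mathrm{low}}$) and confine any invariant graph to the fixed compact strip $\Omega^B_\omega$ via the bounded deviation of the Birkhoff map; you then use $h_c=h_0+\mathcal O_{C^2}(c)$ there, so that both the Mar\`o lower bound and the upper bound at $\bar t$ move by only $\mathcal O(c)$, and conclude with Aubry--Mather theory and Forni's theorem. One minor quibble: uniformity of $A_{\mathrm{low}}$ for $h_c$ comes from the $C^2$-closeness alone, and $\sigma_B$ matters only because $\Omega_B$ is the domain on which $h_c$ is known to be a twist generating function.
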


\begin{remark}\label{rem: cond2}
    It follows from the proof of Lemma~\ref{lem: Aup0} that the assumption~\eqref{eq: kappaR} may be replaced by
    \[\dot R(\bar t)=0,\qquad \ddot R(\bar t)<-\kappa_R,\]
    provided that in the definition of the threshold~\eqref{eq: sharpened criterion} one may replace the denominator $2(\underline R+\overline R)$ by $4\underline R$.
    \end{remark}
The presence of the strictly positive term $A_{\mathrm{low}}$ is the main novelty of this paper. It comes from applying the converse-KAM criterion in~\cite{Maro2020} instead of the criterion in~\cite{Mather1982}. This is detailed in the following remark.
\begin{remark}
The relevant condition in the result in~\cite{BonannoMaro2022} comes from Mather's converse-KAM criterion in~\cite{Mather1982}. With our notation this reads 
\begin{equation}\label{eq: MB cond}
    \kappa_R>\frac{2\overline R^2}{\underline R\,\sigma_B^2},
\qquad
\sigma_B>4.
\end{equation}    
Following the same approach, our threshold \eqref{eq: sharpened criterion} becomes
\begin{equation}\label{eq: Bangert-Mather criterion}
\kappa_R>\inf_{\omega\in(1,\sigma_B-1)}\frac{\omega+1}{2(\overline R+\underline R)}\cdot\frac{8\overline R^2}{(\omega-1)^3},
\end{equation}
in which the term $A_{\mathrm{low}}(\omega,R)$ disappear. From Remark \ref{rem: cond2} and the fact that $\omega\mapsto(\omega+1)/(\omega-1)^3$ is decreasing for $\omega>1$ we recover the threshold \eqref{eq: MB cond}. \\
Moreover, by the continuity of $ \omega\mapsto A_{\mathrm{low}}(\omega,R)$, condition \eqref{eq: sharpened criterion} is still satisfied when we choose $\omega=\sigma_B-1$. This leads to the condition
\[\kappa_R>\frac{2\overline R^2}{\underline R\,\sigma_B^2}-\frac{\sigma_B}{4\underline R}A_{\mathrm{low}}(\sigma_B-1,R),\qquad\sigma_B>2,\]
that improves \eqref{eq: MB cond}.

    %
    %
     
Note that already in the case $A_{\mathrm{low}}=0$ our criterion is sharper. We only assume $\sigma_B>2$ and do not require the additional assumptions~\textup{(i)} and~\textup{(ii)} in~\cite[Definition~2.2]{BonannoMaro2022}. This improvement comes from working in the configuration space instead of the phase space.  
\end{remark}

\section{The generating functions of the breathing billiard and the Fermi-Ulam maps}\label{sec: generating function}

Following~\cite{BonannoMaro2022}, after reduction by rotational symmetry, the dynamics of the breathing circle with fixed angular momentum $c>0$ is described by an exact symplectic twist map of the cylinder $\mathcal P_c$. This allows us to describe the orbits as solutions of a difference equation involving a generating function. More precisely, we write
\[\tau:=t_1-t_0,\qquad R_0:=R(t_0),\qquad R_1:=R(t_1)\]
and define the set
%
%
%
%
%
\begin{equation}\label{eq: Omega}
\Omega_B:=\left\{(t_0,t_1)\in\mathds R^2:0<t_1-t_0<\sigma_B\right\},
\end{equation}
where $\sigma_B$ is defined in~\eqref{eq: sigma}. 
On $\Omega_B$ we define the function
\begin{equation}\label{eq: generating function c}
h_c(t_0,t_1):=\frac{R_0^2+R_1^2+2\sqrt{R_0^2R_1^2-c^2\tau^2}}{2\tau}+c\arctan\left(\frac{c\tau}{\sqrt{R_0^2R_1^2-c^2\tau^2}}\right), \qquad \quad c\in \left(0,\varepsilon\, \frac{\underline{R}^2}{\sigma_B}\right).
\end{equation}
This is the generating function for the reduced breathing circle billiard map on $\Omega_B$ (see~\cite[Section 4]{BonannoMaro2022}). 
More precisely
\begin{equation}\label{eq: diagonal periodic h}
h_c(t_0+1,t_1+1)=h_c(t_0,t_1),\qquad \partial_{t_1t_0}h_c<0
\end{equation}
on $\Omega_B$, and the reduced map $\mathcal P_c$ is defined implicitly by
\begin{equation}\label{eq: implicit relation K-h}
K_0=\partial_1h_c(t_0,t_1),\qquad K_1=-\partial_2h_c(t_0,t_1),
\end{equation}
so that
\begin{equation}\label{eq: billiard map}
\mathcal P_c(\bar t_0,K_0)=(\bar t_1,K_1), \qquad \bar t_i=t_i+\mathds Z.
\end{equation}
In these coordinates, $K_0\,dt_0-K_1\,dt_1=dh_c$. Thus, the map $\mathcal P_c$ is exact symplectic and twist in $\Sigma_B:=\mathds T\times \mathds(\sigma_B^*,+\infty)$ with $\sigma_B^*=\max_{t}\partial_1h_c(t,t+\sigma_B)$. We summarize this in the following proposition.

\begin{proposition}\label{prop: DEL orbit correspondence}
Let $(t_n)_{n\in\mathds Z}$ satisfy
\begin{equation}\label{eq: DEL}
\partial_1h_c(t_n,t_{n+1})+\partial_2h_c(t_{n-1},t_n)=0
\end{equation}
and assume that $(t_n,t_{n+1})\in\Omega_B$ for every $n\in\mathds Z$. Then $(t_n)_{n\in\mathds Z}$ is in one-to-one correspondence with an orbit of the reduced map $\mathcal P_c$ generated by $h_c$.
\end{proposition}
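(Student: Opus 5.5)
The plan is to use the implicit definition \eqref{eq: implicit relation K-h} of $\mathcal P_c$ together with the twist condition $\partial_{t_1t_0}h_c<0$ on $\Omega_B$ from \eqref{eq: diagonal periodic h}. Given a sequence $(t_n)$ with $(t_n,t_{n+1})\in\Omega_B$ solving \eqref{eq: DEL}, I will define momenta by
\[K_n:=\partial_1h_c(t_n,t_{n+1}).\]
Equation \eqref{eq: DEL} then says exactly that $K_n=-\partial_2h_c(t_{n-1},t_n)$. Hence the pair $(t_{n-1},t_n)$ satisfies both relations in \eqref{eq: implicit relation K-h}, with $K_0$ replaced by $K_{n-1}$ and $K_1$ replaced by $K_n$. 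It follows that $\mathcal P_c(\bar t_{n-1},K_{n-1})=(\bar t_n,K_n)$ for every $n$, so $(\bar t_n,K_n)_{n\in\mathds Z}$ is an orbit of $\mathcal P_c$. Diagonal periodicity of $h_c$ makes all of this independent of the lift, because $\partial_i h_c(t_0+1,t_1+1)=\partial_ih_c(t_0,t_1)$.

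For the converse direction, I start from an orbit $(\bar t_n,K_n)$ of $\mathcal P_c$ in $\Sigma_B$. Fix a lift $t_0$. The twist condition makes $t_1\mapsto \partial_1h_c(t_0,t_1)$ strictly decreasing on $(t_0,t_0+\sigma_B)$. So there is at most one $t_1$ in that interval with $\partial_1h_c(t_0,t_1)=K_0$. For $K_0>\sigma_B^*$ such a $t_1$ exists, provided $\partial_1h_c(t_0,t_1)\to+\infty$ as $t_1\to t_0^+$. This holds because the leading term of $h_c$ behaves like $(R_0^2+R_1^2+2R_0R_1)/(2\tau)$, whose $t_0$-derivative blows up like $R_0^2/\tau^2$ (up to constants). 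At the other end, $\partial_1h_c(t_0,t_0+\sigma_B)\le\sigma_B^*$ by the definition of $\sigma_B^*$.

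With this, the lifts $t_n$ are determined inductively forward from $t_0$, and backward in the analogous way using the monotonicity of $t_{-1}\mapsto\partial_2h_c(t_{-1},t_0)$. The resulting sequence satisfies \eqref{eq: implicit relation K-h} at each step, and subtracting consecutive relations yields \eqref{eq: DEL}. The two constructions are mutually inverse, up to the global integer shift $t_n\mapsto t_n+k$, which gives the one-to-one correspondence once a lift of $t_0$ is fixed.

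The main obstacle I expect is the surjectivity/uniqueness step just described: showing that $\partial_1h_c(t_0,\cdot)$ maps $(t_0,t_0+\sigma_B)$ bijectively onto a set containing $(\sigma_B^*,\infty)$. This requires checking the boundary behavior as $\tau\to0^+$ for $c>0$. The square root $\sqrt{R_0^2R_1^2-c^2\tau^2}$ stays close to $R_0R_1$ there, and the arctan term is smooth, so the $1/\tau^2$ singularity dominates. Since the paper cites \cite[Section 4]{BonannoMaro2022} for these properties, I would appeal to that reference for the twist and boundary estimates, and present the proposition mainly as the standard variational dictionary.
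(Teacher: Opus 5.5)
Your argument is correct and follows the paper's approach. The paper gives no separate proof: it reads the proposition off the implicit definition \eqref{eq: implicit relation K-h}, setting $K_n=\partial_1h_c(t_n,t_{n+1})=-\partial_2h_c(t_{n-1},t_n)$, uses the twist condition, and defers the twist and boundary estimates to \cite{BonannoMaro2022}, exactly as you do. One bookkeeping point: your forward construction only guarantees $K_n>\partial_1h_c(t_n,t_n+\sigma_B)$, not $K_n>\sigma_B^*$, so for the two constructions to be mutually inverse you should run the converse on $\{(\bar t,K):K>\partial_1h_c(t,t+\sigma_B)\}$ rather than on $\Sigma_B$ (your intermediate-value argument works there verbatim), and the existence of $t_{-1}$ should come from the given orbit point $(\bar t_{-1},K_{-1})$ rather than from a range argument for $\partial_2h_c$.
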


The case with null angular momentum ($c=0$) corresponds to the motion on the diameter. It is equivalent to the Fermi-Ulam problem~\cite{Ulam1961}. Substituting formally $c=0$ in~\eqref{eq: generating function c} we get
\begin{equation}\label{eq: generating function FU}
h_0(t_0,t_1)=\frac{(R_0+R_1)^2}{2(t_1-t_0)},    
\end{equation}
that has been proved to be the generating function of the Fermi-Ulam map in~\cite[Section~4]{MaroOrtega2026} on the set 
\begin{equation}\label{Omega0}
\Omega_0:=\left\{(t_0,t_1)\in\mathds R^2:0<t_1-t_0<\sigma_0\right\},
\end{equation}
where $\sigma_0$ is defined in~\eqref{eq: sigma}. In particular, 
\begin{equation}\label{eq: h0 twist}
h_0(t_0+1,t_1+1)=h_0(t_0,t_1),\quad -\partial_{12}h_0(t_0,t_1)=\frac{1}{\tau}\left(\frac{R_0+R_1}{\tau}+\dot R_0\right)\left(\frac{R_0+R_1}{\tau}-\dot R_1\right)>0
\end{equation}
on $\Omega_0$. Analogously to the previous case, we denote by $\mathcal P_0$ the associated map in the cylinder. Defining 
\[\sigma_0^*:=\max_{t}\partial_1h_0(t,t+\sigma_0),\]
the map $\mathcal P_0$ is an exact symplectic twist map in $\Sigma_0:=\mathds T\times \mathds(\sigma_0^*,+\infty)$. We state this result in the following proposition.

\begin{proposition}\label{prop: DEL orbit correspondence0}
Let $(t_n)_{n\in\mathds Z}$ satisfy
\begin{equation}\label{eq: DEL0}
\partial_1h_0(t_n,t_{n+1})+\partial_2h_0(t_{n-1},t_n)=0
\end{equation}
and assume that $(t_n,t_{n+1})\in\Omega_0$ for every $n\in\mathds Z$. Then $(t_n)_{n\in\mathds Z}$ is in one-to-one correspondence with an orbit of the map $\mathcal P_0$ generated by $h_0$.
\end{proposition}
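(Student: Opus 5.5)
The plan is the standard correspondence between stationary sequences of the action and orbits of an exact symplectic twist map, now for the Fermi-Ulam generating function $h_0$ on $\Omega_0$. Given $(t_n)_{n\in\mathds Z}$ with $(t_n,t_{n+1})\in\Omega_0$, I will define
\[K_n:=\partial_1h_0(t_n,t_{n+1}).\]
By \eqref{eq: DEL0} this equals $-\partial_2h_0(t_{n-1},t_n)$, so the pair $(t_n,t_{n+1})$ together with $K_n$ and $K_{n+1}$ satisfies the implicit relations $K_n=\partial_1h_0(t_n,t_{n+1})$ and $K_{n+1}=-\partial_2h_0(t_n,t_{n+1})$. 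Hence $\mathcal P_0(\bar t_n,K_n)=(\bar t_{n+1},K_{n+1})$ for every $n$, and $(\bar t_n,K_n)_n$ is an orbit of $\mathcal P_0$.

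The step that needs care is that $\mathcal P_0$ is well defined from these implicit relations. For each fixed $t_0$, the map $t_1\mapsto\partial_1h_0(t_0,t_1)$ must be a diffeomorphism from $(t_0,t_0+\sigma_0)$ onto its image. This follows from the twist inequality \eqref{eq: h0 twist}: on $\Omega_0$ we have $|\dot R|<\underline R/\sigma_0\le (R_0+R_1)/(2\tau)$, so both factors are positive and $\partial_{12}h_0<0$. Thus $\partial_1h_0(t_0,\cdot)$ is strictly monotone. Moreover, $\partial_1h_0=(R_0+R_1)\dot R_0/\tau+(R_0+R_1)^2/(2\tau^2)\to+\infty$ as $\tau\to0^+$. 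Consequently every $K>\sigma_0^*\ge\partial_1h_0(t_0,t_0+\sigma_0)$ has a unique preimage $t_1$ with $0<\tau<\sigma_0$. Diagonal periodicity of $h_0$ makes the construction descend to the cylinder, i.e.\ it is compatible with $t_i\mapsto t_i+1$. I am using here the identification from \cite[Section~4]{MaroOrtega2026} that this implicitly defined map is the Fermi-Ulam map.

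For the converse, I start from an orbit $(\bar t_n,K_n)$ of $\mathcal P_0$ in the region where it is generated by $h_0$. The lifts are chosen inductively: fix a lift $t_0$, then let $t_{n+1}$ be the unique lift with $t_{n+1}-t_n\in(0,\sigma_0)$ solving $K_n=\partial_1h_0(t_n,t_{n+1})$, and proceed similarly backwards. Then $K_{n}=-\partial_2h_0(t_{n-1},t_n)=\partial_1h_0(t_n,t_{n+1})$, which is exactly \eqref{eq: DEL0}.

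The maps sequence $\mapsto$ orbit and orbit $\mapsto$ sequence are mutually inverse, because $K_n$ is determined by $(t_n,t_{n+1})$ and $t_{n+1}$ is determined by $(t_n,K_n)$. This holds modulo the common integer shift $t_n\mapsto t_n+k$, which is the intended meaning of "one-to-one" on the cylinder. The only real obstacle is the invertibility in $t_1$ discussed above, which is settled by the twist bound on $\Omega_0$.
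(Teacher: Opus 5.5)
Your argument is correct and follows the standard route the paper relies on: the paper gives no separate proof, only the implicit definition of $\mathcal P_0$ through $K_0=\partial_1h_0$, $K_1=-\partial_2h_0$ on $\Omega_0$, the twist identity \eqref{eq: h0 twist}, and the cited identification of $h_0$ as the Fermi--Ulam generating function, and you spell out those same steps in more detail. Two small fixes: the inequality chain should read $|\dot R_i|\le\|\dot R\|=\underline R/\sigma_0<(R_0+R_1)/(2\tau)$, where the strict inequality comes from $\tau<\sigma_0$ rather than from $|\dot R|<\|\dot R\|$; and uniqueness of $t_1$ holds for every $K$ in the image $(\partial_1h_0(t_0,t_0+\sigma_0),+\infty)$, not only for $K>\sigma_0^*$, which is what you actually need since the $K_n$ built from the sequence need not exceed $\sigma_0^*$.
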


The relation between these two generating functions is given by the following lemma.

\begin{lemma}\label{lem: C2 convergence}
Let $\Omega_B$ and $\Omega_0$ be the sets defined in~\eqref{eq: Omega} and~\eqref{Omega0}, respectively. Then $\Omega_B\subset\Omega_0$ and the following expansion holds uniformly in $\Omega_B$ as $c\to 0^+$:
%
%
%
\[h_c=h_0+\mathcal O_{C^2}(c),
\]
where $h_c$ and $h_0$ are defined in~\eqref{eq: generating function c} and~\eqref{eq: generating function FU}, respectively. Moreover, uniformly in $\Sigma_B$, as $c\to 0^+$ we have
\[
\mathcal P_c=\mathcal P_0 + \mathcal O_{C^1}(c).
\]
\end{lemma}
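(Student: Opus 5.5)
The inclusion $\Omega_B\subset\Omega_0$ is immediate from the definitions: by~\eqref{eq: sigma} we have $\sigma_B\le \underline R/(2\|\dot R\|)<\underline R/\|\dot R\|=\sigma_0$ when $\dot R\not\equiv0$, and both quantities are infinite otherwise. The expansion of $h_c$ will come from writing $h_c=F(t_0,t_1,c)$ with $F$ jointly $C^2$ in $(t_0,t_1)$ and $C^1$ in $c$ on a suitable closed region. The key algebraic observation is
\[
R_0^2+R_1^2+2\sqrt{R_0^2R_1^2-c^2\tau^2}\Big|_{c=0}=(R_0+R_1)^2,
\]
so $F(\cdot,\cdot,0)=h_0$. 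Writing $u:=c^2\tau^2/(R_0^2R_1^2)$, the admissible range $c<\varepsilon\underline R^2/\sigma_B$ together with $\tau<\sigma_B$ gives $c\tau<\varepsilon\underline R^2\le\varepsilon R_0R_1$, hence $u\le\varepsilon^2<1$ uniformly. The square root $\sqrt{R_0^2R_1^2-c^2\tau^2}=R_0R_1\sqrt{1-u}$ and the arctangent term $c\arctan\bigl(\sqrt{u}/\sqrt{1-u}\bigr)=c\arcsin(c\tau/(R_0R_1))$ are therefore smooth functions of $(R_0,R_1,\tau,c)$ on a compact set that stays away from the singularity $u=1$.

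Next I would estimate the difference directly:
\[
h_c-h_0=\frac{R_0R_1(\sqrt{1-u}-1)}{\tau}+c\arcsin\frac{c\tau}{R_0R_1}.
\]
Since $\sqrt{1-u}-1=-u/(1+\sqrt{1-u})$, the first term equals $-c^2\tau/\bigl(R_0R_1(1+\sqrt{1-u})\bigr)$. This form removes the apparent $1/\tau$ singularity and is $\mathcal O(c^2)$. The second term is $\mathcal O(c^2)$ as well. Differentiating up to second order in $t_0,t_1$ produces derivatives of $R$ up to order two, which are bounded because $R\in C^2$ is periodic. It also produces derivatives of $(1+\sqrt{1-u})^{-1}$ and of $\arcsin$, which stay bounded since $u\le\varepsilon^2$, and powers of $\tau\le\sigma_B$, which are harmless. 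In fact no negative powers of $\tau$ appear after this rewriting, so the bounds are uniform even as $\tau\to0^+$. The conclusion is $\|h_c-h_0\|_{C^2(\Omega_B)}=\mathcal O(c^2)\subset\mathcal O(c)$.

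For the maps, $\mathcal P_c$ is defined through~\eqref{eq: implicit relation K-h}: given $(t_0,K_0)$, one solves $K_0=\partial_1h_c(t_0,t_1)$ for $t_1$ and sets $K_1=-\partial_2h_c(t_0,t_1)$. Solving for $t_1$ is possible by the twist condition $\partial_{12}h_c<0$. I would apply the implicit function theorem with $c$ as a parameter to $G(t_0,t_1,K_0,c)=\partial_1h_c(t_0,t_1)-K_0$. This gives $t_1=T_c(t_0,K_0)$ and $T_c-T_0=\mathcal O(c)$ in $C^1$, with a constant controlled by the ratio of the $C^2$ error of $h_c-h_0$ to a lower bound on $|\partial_{12}h_0|$. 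Composing with $\partial_2h_c$ then yields the $C^1$ estimate for $K_1$.

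\textbf{Main obstacle.} The uniformity over the noncompact region $\Sigma_B$ is the difficult step. As $K_0\to\infty$, $\tau\to0$, and then $\partial_{12}h_0\sim -4\underline R^2/\tau^3$ blows up, while the relevant derivatives in the implicit function theorem scale with powers of $1/\tau$. I would therefore compute $\partial_{t_0}T_c$ and $\partial_{K_0}T_c$ explicitly, as ratios $-\partial_{11}h_c/\partial_{12}h_c$ and $1/\partial_{12}h_c$. The key tool is the lower bound
\[
|\partial_{12}h_0|\ge \frac{C}{\tau^3},
\]
which follows from~\eqref{eq: h0 twist} and $\tau<\sigma_B\le\underline R/(2\|\dot R\|)$: each factor $\frac{R_0+R_1}{\tau}\pm\dot R$ exceeds $\frac{3}{2}\cdot\frac{2\underline R}{\tau}\cdot\frac{1}{\cdot}$, i.e. is $\gtrsim\underline R/\tau$. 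Combined with the $\mathcal O(c^2)$ perturbation bounds, which carry no negative powers of $\tau$, this shows that the differences of these ratios are $\mathcal O(c)$ uniformly. To get the $C^1$ estimate for $K_1$, I would also check that $\partial_{22}h_0$ and $\partial_{12}h_0$ produce matching powers of $\tau$, so that the composition stays uniformly bounded. If this does not close, I would restrict to the range $\tau\in[\tau_*,\sigma_B)$ relevant to the later application, which corresponds to fixed rotation numbers $\omega$.
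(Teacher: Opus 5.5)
Your first half is correct and follows the paper's route: expand the square root and the arctangent using $R_0R_1\ge\underline R^{2}$. It is in fact more careful than the paper. Your rewriting $h_c-h_0=-\frac{c^2\tau}{R_0R_1(1+\sqrt{1-u})}+c\arcsin\frac{c\tau}{R_0R_1}$ removes the factor $1/\tau$ that the paper's ``$\sqrt{R_0^2R_1^2-c^2\tau^2}=R_0R_1+\mathcal O_{C^2}(c)$'' leaves implicit. It gives $\mathcal O_{C^2}(c^2)$ uniformly on all of $\Omega_B$, including $\tau\to0^+$.

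The gap is in the statement about the maps, which you leave as a plan (``I would also check \dots; if this does not close, I would restrict \dots''). The fallback of restricting to $\tau\ge\tau_*$ would prove something weaker than uniformity on $\Sigma_B$. You also never check that $\mathcal P_0$ is defined on $\Sigma_B$. The paper does this: from $\partial_{12}h_0<0$ and $\sigma_B<\sigma_0$ one gets $\sigma_B^*>\sigma_0^*+\mathcal O(c)$, hence $\Sigma_B\subset\Sigma_0$ for small $c$.

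More substantively, a $C^2$ bound on $h_c-h_0$ plus a lower bound on $|\partial_{12}h_0|$ does not by itself give a $C^1$ rate for the maps. At the same point $(t_0,K_0)$, $D\mathcal P_c$ is built from $D^2h_c(t_0,T_c)$ and $D\mathcal P_0$ from $D^2h_0(t_0,T_0)$. So you also need the regularity of $D^2h_0$ in $t_1$ at the scale $|T_c-T_0|\lesssim c^2\tau^3$.

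For $\partial_{11}h_0/\partial_{12}h_0$ and $1/\partial_{12}h_0$ this is harmless, since they involve only $R$ and $\dot R$ at $t_1$. So the $t_1$-component closes. The $K_1$-component does not. Its derivatives are $\partial_{t_0}K_1=\bigl(\partial_{11}h\,\partial_{22}h-(\partial_{12}h)^2\bigr)/\partial_{12}h$ and $\partial_{K_0}K_1=-\partial_{22}h/\partial_{12}h$, and $\partial_{22}h_0$ contains $S\ddot R(t_1)/\tau$. Hence $\partial_{t_0}K_1^c-\partial_{t_0}K_1^0$ contains the term $\frac{\partial_{11}h_0}{\partial_{12}h_0}\,\frac{S}{\tau}\bigl(\ddot R(T_c)-\ddot R(T_0)\bigr)$, with $\partial_{11}h_0/\partial_{12}h_0=-1+\mathcal O(\tau)$. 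For $R$ merely $C^2$ this is $o(1)$ but not $\mathcal O(c)$, even on a compact range $\tau\ge\tau_*$.

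An $\mathcal O(c)$ rate needs extra regularity, e.g.\ $\ddot R$ H\"older of exponent $\tfrac12$, or $R\in C^3$. Under such an assumption the uniformity as $\tau\to0^+$ does close. The ``matching powers'' you would have to verify is the cancellation of the leading singular terms of $\partial_{11}h_0\,\partial_{22}h_0-(\partial_{12}h_0)^2$, a determinant that vanishes identically for constant $R$.

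For comparison, the paper's own argument for the map statement is only the domain inclusion plus the remark that it ``comes from the definitions''. Its later sections use only the $C^2$ closeness of $h_c$ and $h_0$ on the compact strips $\Omega_\omega^B$, which your first half does establish.
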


\begin{proof}
The inclusion $\Omega_B\subset\Omega_0$ follows from the fact that $\sigma_B<\sigma_0$ (see~\eqref{eq: sigma}). Moreover, since $R_0R_1>\underline{R}^2>0$, from \eqref{eq: generating function c}
we have, uniformly in $\Omega_B$,
\[\sqrt{R_0^2R_1^2-c^2\tau^2}=R_0R_1+\mathcal O_{C^2}(c),\qquad c\arctan\left(\frac{c\tau}{\sqrt{R_0^2R_1^2-c^2\tau^2}}\right)=\mathcal O_{C^2}(c).\]
Moreover, since $\partial_{12}h_0<0$ on $\Omega_0$ and $\sigma_B<\sigma_0$ we have
\[
\begin{aligned}
\sigma_B^*:&=\max_{t}\partial_1h_c(t,t+\sigma_B)=\max_{t}\partial_1h_0(t,t+\sigma_B)+\mathcal O_{C^1}(c)>\max_{t}\partial_1h_0(t,t+\sigma_0)+\mathcal O_{C^1}(c)\\
&=\sigma_0^*+\mathcal O_{C^1}(c).
\end{aligned}
\]
This implies that for $c\rightarrow 0^+$, $\Sigma_B\subset\Sigma_0$. The relation between $\mathcal P_c$ and $\mathcal P_0$ comes from their definitions in terms of $h_0$ and $h_c$. 
This concludes the proof.
\end{proof}

\section{Chaotic motion}\label{sec: chaos}

To prove Theorems~\ref{thm: chaos FU} and~\ref{thm: chaos} we proceed as follows. In Section~\ref{subsec: Theorem12} we prove 
the existence of chaotic motions for the Fermi-Ulam map $\mathcal P_0$. Then, in Section~\ref{subsec: Theorem13} we see how Lemma \ref{lem: C2 convergence} allows us to extend this result to the breathing circle map $\mathcal P_c$ for sufficiently small $c>0$.

\subsection{The Fermi-Ulam model and proof of Theorem \ref{thm: chaos FU}}\label{subsec: Theorem12}
We assume throughout this section that $R\in\widetilde{\mathcal R}_0$ in the sense of Definition~\ref{def: function R} and we consider the generating function $h_0(t_0,t_1)$ in \eqref{eq: generating function FU} defined on the set $\Omega_0$ in \eqref{Omega0}. 

The argument is structured as follows:

\begin{enumerate}
\item We prove the existence of Aubry-Mather sets for the map $\mathcal P_0$ for every rotation number $\omega\in(1,\sigma_0-1)$. Then, starting from an assumed invariant Lipschitz graph, we prove a bounded-deviation estimate for the lift of the induced homeomorphism $\varphi_0$ (see Lemma \ref{lem: localization0}). As a consequence, all consecutive impact pairs along the graph remain in a compact strip contained in $\Omega_0$.

\item Restricting to the above strip, we obtain uniform bounds depending on $\underline R,\overline R$ and $\kappa_R$ (see~\eqref{eq: kappaR}) that must hold along any invariant Lipschitz graph of rotation number $\omega$. 

\item Relying on these explicit bounds, we show that for $R\in\widetilde{\mathcal R}_0$ (see Definition~\ref{def: function R}) there exists a nonempty set $\Xi_R$ of rotation numbers $\omega\in(1,\sigma_0-1)$ for which the existence of an invariant Lipschitz graph of rotation number $\omega$ is not possible.

\item We choose an irrational $\omega_*\in\Xi_R$ and show that the corresponding Aubry-Mather set is a Cantor set. This yields positive topological entropy for $\mathcal P_0$ by Forni's theorem (see Theorem~\ref{thm: CKAM-forni}), completing the proof of Theorem~\ref{thm: chaos FU}.
\end{enumerate}

\paragraph{Step 1. Aubry-Mather sets and a rotation strip for invariant graphs.}
Since $\sigma_0>2$, for every $\omega\in(1,\sigma_0-1)$ we can choose $0<\beta<\min\{\omega-1,\sigma_0-\omega-1\}$. Consider the compact strip
\begin{equation}\label{eq: Omega0 beta}
\Omega_{0,\beta}:= \left\{(t_0,t_1)\in\mathds R^2:\beta\leq t_1-t_0\leq\sigma_0-\beta\right\}\subset\Omega_0,
\end{equation}
where $\Omega_0$ is defined in~\eqref{Omega0}.

By Lemma~\ref{lem: extension}, there exists a $C^2$ diagonally periodic extension $\widetilde h_0\colon\mathds R^2\to\mathds R$ of $h_0$ in~\eqref{eq: generating function FU} such that $\widetilde h_0=h_0$ on $\Omega_{0,\beta}$, $\partial_{12}\widetilde h_0<0$ on $\mathds R^2$, and $\widetilde h_0$ has quadratic tails. Then the implicit relations
\[\widetilde K_0:=\partial_1\widetilde h_0(t_0,t_1),\qquad\widetilde K_1:=-\partial_2\widetilde h_0(t_0,t_1)\]
define a global exact symplectic twist diffeomorphism
\begin{equation}\label{eq: global twist map0}
\widetilde{\mathcal P}_0\colon\mathds T\times\mathds R\to\mathds T\times\mathds R.
\end{equation}
We will apply Aubry-Mather theory to $\widetilde{\mathcal P}_0$, and then use the bounded-deviation property of minimizers in Lemma~\ref{lem: increment} to conclude that, for $\omega\in(1,\sigma_0-1)$, the minimizing dynamics stays inside $\Omega_{0,\beta}$ and therefore coincides with the dynamics of Fermi-Ulam map $\mathcal P_0$.

\begin{proposition}\label{prop: rotation numbers0}
For every $\omega\in(1,\sigma_0-1)$, the map $\mathcal P_0$ admits:
\begin{itemize}
\item if $\omega=p/q\in\mathds Q$ in lowest terms, a $q$-periodic minimizing orbit $(t_n,K_n)$ with $(t_{n+q},K_{n+q})=(t_n+p,K_n)$ for all $n$;
\item if $\omega\notin\mathds Q$, a compact minimizing invariant set $M_{\omega,0}$ of rotation number $\omega$, contained in a Lipschitz graph over $\mathds T$ and such that the projection to the base is either all of $\mathds T$ (invariant-curve case) or a Cantor subset (Cantor case).
\end{itemize}
\end{proposition}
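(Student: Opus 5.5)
The plan is to apply Aubry-Mather theory to the global twist map $\widetilde{\mathcal P}_0$ in \eqref{eq: global twist map0}, generated by the extension $\widetilde h_0$ of Lemma~\ref{lem: extension}, and then transfer the conclusions to $\mathcal P_0$. Since $\widetilde h_0$ is $C^2$, diagonally periodic, satisfies $\partial_{12}\widetilde h_0<0$ on $\mathds R^2$ and has quadratic tails, the standard hypotheses of Bangert's and Mather's framework~\cite{Bangert1988, Mather1991} hold. Hence for every $\omega\in\mathds R$ there are minimizing configurations $(t_n)$ of rotation number $\omega$ for the action of $\widetilde h_0$.

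For $\omega=p/q$ in lowest terms, we take a minimizer among configurations with $t_{n+q}=t_n+p$. For irrational $\omega$, we take the closure of the set of recurrent minimizers, which gives a compact $\widetilde{\mathcal P}_0$-invariant set $M_{\omega,0}$ of rotation number $\omega$. This set is a graph over its projection, with Lipschitz inverse (Mather's Lipschitz graph theorem), and its projection is either $\mathds T$ or a Cantor set. In either case the minimizers satisfy the discrete Euler--Lagrange equation for $\widetilde h_0$, and via $\widetilde K_n=\partial_1\widetilde h_0(t_n,t_{n+1})$ they give orbits of $\widetilde{\mathcal P}_0$.

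The key step is localization. Minimizers of rotation number $\omega$ are cyclically ordered (Aubry's crossing lemma), so the bounded-deviation property of Lemma~\ref{lem: increment} gives $|t_{n+1}-t_n-\omega|<1$ for all $n$. By the choice $0<\beta<\min\{\omega-1,\sigma_0-\omega-1\}$ we then get
\[
\beta<\omega-1<t_{n+1}-t_n<\omega+1<\sigma_0-\beta,
\]
so every consecutive pair $(t_n,t_{n+1})$ lies in $\Omega_{0,\beta}\subset\Omega_0$. On this strip $\widetilde h_0=h_0$, and its first derivatives also agree there. I expect this point to need the most care: the pairs lie in the interior of the strip where the two functions coincide, so the Euler--Lagrange equation for $\widetilde h_0$ along the configuration is exactly \eqref{eq: DEL0}, and the momenta $\widetilde K_n$ equal $K_n=\partial_1 h_0(t_n,t_{n+1})$.

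By Proposition~\ref{prop: DEL orbit correspondence0}, these configurations then correspond to genuine orbits of $\mathcal P_0$. The periodicity $(t_{n+q},K_{n+q})=(t_n+p,K_n)$ follows from diagonal periodicity of $h_0$. The compactness, invariance, graph property and dichotomy of $M_{\omega,0}$ are inherited from the corresponding properties for $\widetilde{\mathcal P}_0$, since the two maps agree on it.

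The main obstacle is the bounded-deviation estimate, that is, ensuring Lemma~\ref{lem: increment} yields a strict bound $|t_{n+1}-t_n-\omega|<1$. If it only yields $\le 1$, I would shrink $\beta$ slightly, which the strict inequality in its choice allows.
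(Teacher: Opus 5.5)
Your proposal is correct and follows the paper's proof essentially verbatim: you apply Aubry--Mather theory to the extended map $\widetilde{\mathcal P}_0$, use the bounded-deviation estimate of Lemma~\ref{lem: increment} to confine consecutive pairs to $\Omega_{0,\beta}$ where $\widetilde h_0=h_0$, and conclude via Proposition~\ref{prop: DEL orbit correspondence0}. The ``main obstacle'' you flag is not one: even with only $|t_{n+1}-t_n-\omega|\le 1$, the strict choice $0<\beta<\min\{\omega-1,\sigma_0-\omega-1\}$ already gives $\beta<t_{n+1}-t_n<\sigma_0-\beta$, so the pairs lie in the interior of $\Omega_{0,\beta}$, the first derivatives of $\widetilde h_0$ and $h_0$ agree there, and no shrinking of $\beta$ is needed.
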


\begin{proof}[Proof of Proposition~\ref{prop: rotation numbers0}]
Apply Theorem~\ref{thm: Aubry-Mather} to the global twist map $\widetilde{\mathcal P}_0$ generated by $\widetilde h_0$. Minimizing configurations with rotation number $\omega\in(1,\sigma_0-1)$ satisfy the bounded-deviation estimate in Lemma~\ref{lem: increment}, hence remain in $\Omega_{0,\beta}$ defined in~\eqref{eq: Omega0 beta}. Since $\widetilde h_0=h_0$ on $\Omega_{0,\beta}$, these minimizing configurations are also minimizing for $h_0$ and correspond to orbits of the map $\mathcal P_0$ (see Proposition~\ref{prop: DEL orbit correspondence0}).
\end{proof}

For every $\omega\in(1,\sigma_0-1)$, let
\begin{equation}\label{eq: Omega0 omega}
\Omega_\omega:=
\left\{(t_0,t_1)\in\mathds R^2:\omega-1\leq t_1-t_0\leq\omega+1\right\}\subset \Omega_{0,\beta}\subset \Omega_0,
\end{equation}
where $\Omega_0$ is defined in~\eqref{eq: Omega}. A direct consequence of Lemma \ref{lemma: birk} is the following lemma.
\begin{lemma}\label{lem: localization0}
Let $\Gamma_0$ be an invariant Lipschitz graph for the global twist map $\widetilde{\mathcal P}_0$ in~\eqref{eq: global twist map0}, and let $\varphi_0:\mathds R\to\mathds R$ be the corresponding Birkhoff map (see Appendix~\ref{app:CKAM} for the definition of the Birkhoff map). If the rotation number of $\varphi_0$ is $\omega\in(1,\sigma_0-1)$, then for all $x\in\mathds R$, 
\[
(x,\varphi_0(x))\in\Omega_\omega.
\]
With some abuse of notation we will also write $\Gamma_0\subset\Omega_\omega$.
%
%
%
%
\end{lemma}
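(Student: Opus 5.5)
The claim is that $\varphi_0(x)-x\in[\omega-1,\omega+1]$ for every $x\in\mathds R$. This should follow from standard facts about lifts of orientation-preserving circle homeomorphisms. I use the properties of the Birkhoff map recorded in Lemma~\ref{lemma: birk}: $\varphi_0$ is a lift of an orientation-preserving circle homeomorphism, so it is strictly increasing and satisfies $\varphi_0(x+1)=\varphi_0(x)+1$. Moreover its rotation number $\omega=\lim_{n\to\infty}(\varphi_0^n(x)-x)/n$ exists and does not depend on $x$. For the graph $\Gamma_0=\{(x,y)\}$, the map is $\widetilde{\mathcal P}_0(x,y)=(\varphi_0(x),\cdot)$, and this is the same as saying $y=\partial_1\widetilde h_0(x,\varphi_0(x))$.

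First, set $g(x):=\varphi_0(x)-x$. This function is continuous and $1$-periodic, so it attains a minimum $m$ and a maximum $M$. Next comes the classical oscillation bound $M-m<1$. Suppose instead that $g(x_2)-g(x_1)\ge 1$ for some $x_1,x_2$. By periodicity I may take $x_1\le x_2<x_1+1$. Monotonicity and periodicity of $\varphi_0$ give $\varphi_0(x_2)<\varphi_0(x_1+1)=\varphi_0(x_1)+1$. This yields $g(x_2)-g(x_1)<1-(x_2-x_1)\le 1$, which is a contradiction. The reverse case, with $x_1$ and $x_2$ swapped, is handled the same way.

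Then I need $m\le\omega\le M$. Write $\varphi_0^n(x)-x=\sum_{k=0}^{n-1}g(\varphi_0^k(x))$. This sum lies in $[nm,nM]$, and dividing by $n$ and letting $n\to\infty$ gives the bound. Combining the two facts, for every $x$ I get $g(x)\ge m> M-1\ge\omega-1$ and $g(x)\le M< m+1\le\omega+1$. Hence $\omega-1< \varphi_0(x)-x<\omega+1$, that is, $(x,\varphi_0(x))\in\Omega_\omega$. The inclusion $\Omega_\omega\subset\Omega_{0,\beta}\subset\Omega_0$ then follows from the choice $\beta<\min\{\omega-1,\sigma_0-\omega-1\}$.

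No step here is a real obstacle. The only point needing care is to confirm, from the appendix definition, that $\varphi_0$ really is a lift of an orientation-preserving homeomorphism with rotation number $\omega$. For a Lipschitz invariant graph of a twist map, this is exactly the content of Lemma~\ref{lemma: birk}.
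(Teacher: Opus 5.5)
Your argument is correct and is essentially the paper's: the paper obtains the lemma immediately by taking $n=1$ in the bounded-deviation estimate \eqref{eq: bounded deviation varphi} of Lemma~\ref{lemma: birk}, which gives $|\varphi_0(x)-x-\omega|\le 1$. Your oscillation argument for $g=\varphi_0-\mathrm{id}$ shows $M-m<1$ and $m\le\omega\le M$, which re-proves that $n=1$ case (even with strict inequalities) from monotonicity and $1$-periodicity of the lift. This is valid but redundant, since Lemma~\ref{lemma: birk} already supplies the bound.
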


%
%

\paragraph{Step 2. Converse-KAM bounds and exclusion of invariant graphs.}
Assume, towards a contradiction, that there exists an invariant Lipschitz graph $\Gamma_0$ for $\widetilde{\mathcal P}_0$ of rotation number $\omega\in(1,\sigma_0-1)$. By Lemma~\ref{lem: localization0}, $\Gamma_0\subset\Omega_\omega$. On $\Gamma_0$ we work with the physical generating function $h_0$ in~\eqref{eq: generating function FU}.

Denote by
\[\tau:=t_1-t_0,\quad R_0:=R(t_0),\quad R_1:=R(t_1),\quad S:=R_0+R_1.\]
A direct computation from \eqref{eq: generating function FU} yields in $\Omega_\omega$
\begin{equation}\label{eq: hc11 hc22 hc120}
\begin{aligned}
\partial_{11}h_0(t_0,t_1)&=\frac{\dot R_0^{2}+S\ddot R_0}{\tau}+\frac{2S\dot R_0}{\tau^2}+\frac{S^2}{\tau^3},\\
\partial_{22}h_0(t_0,t_1)&=\frac{\dot R_1^{2}+S\ddot R_1}{\tau}-\frac{2S\dot R_1}{\tau^2}+\frac{S^2}{\tau^3},\\
-\partial_{12}h_0(t_0,t_1)&=\frac{1}{\tau}\left(\frac{S}{\tau}+\dot R_0\right)\left(\frac{S}{\tau}-\dot R_1\right).
\end{aligned}
\end{equation}
Let $\varphi_0$ be the induced homeomorphism on $\Gamma_0$ and define, for $x\in\mathds R$,
\begin{equation}\label{eq: ab0}
a_0(x):=\partial_{22}h_0(\varphi_0^{-1}(x),x)+\partial_{11}h_0(x,\varphi_0(x)),\qquad b_0(x):=-\partial_{12}h_0(\varphi_0^{-1}(x),x).
\end{equation}
By the periodicity of $h_0$ in~\eqref{eq: h0 twist}, both $a_0(x)$ and $b_0(x)$ are $1$-periodic. The twist condition in~\eqref{eq: h0 twist} gives $b_0(x)>0$ for every $x\in\mathds R$.
Let us consider, for $\omega\in(1,\sigma_0-1)$, the continuous functions
\begin{equation}\label{eq: H11 H22 b*0}
\begin{aligned}
H_{11}^0(\omega)&:=\max_{t\in\mathds T,\;\tau\in[\omega-1,\omega+1]}|\partial_{11}h_0(t,t+\tau)|,& H_{22}^0(\omega)&:=\max_{t\in\mathds T,\;\tau\in[\omega-1,\omega+1]}|\partial_{22}h_0(t,t+\tau)|,\\
b_*^0(\omega)&:=\min_{t\in\mathds T,\;\tau\in[\omega-1,\omega+1]}\left(-\partial_{12}h_0(t,t+\tau)\right),& b_0^*(\omega)&:=\max_{t\in\mathds T,\;\tau\in[\omega-1,\omega+1]}\left(-\partial_{12}h_0(t,t+\tau)\right).
\end{aligned}
\end{equation}
We now derive quantitative estimates for the coefficient $a_0(x)$ defined in~\eqref{eq: ab0} along an invariant Lipschitz graph of rotation number $\omega\in(1,\sigma_0-1)$, result of the following lemmas.

\begin{lemma}\label{lem: Alow0}
Let $\Gamma\subset \Omega_\omega$ be an invariant Lipschitz graph of rotation number $\omega\in(1,\sigma_0-1)$, and $a_0(x)$ be defined in~\eqref{eq: ab0}. Then
\begin{equation}\label{eq: Alow 0}
a_0(x)> A_{\mathrm{low}}(\omega,R):=2\frac{\left(b_{\mathrm{low}}(\omega)\right)^2}{H_{\mathrm{up}}(\omega)}\qquad \forall x\in\mathds R,
\end{equation}
where
\[b_{\mathrm{low}}(\omega):=\frac{1}{\omega+1}\left(\frac{2\underline R}{\omega+1}-\|\dot R\|\right)^2,\quad H_{\mathrm{up}}(\omega):=\frac{2\|\dot R\|^2+4\overline R\|\ddot R\|}{\omega-1}+\frac{8\overline R\|\dot R\|}{(\omega-1)^2}+\frac{8\overline R^{\,2}}{(\omega-1)^3}.\]
\end{lemma}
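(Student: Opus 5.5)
The plan is to obtain the bound directly from the quantitative converse-KAM criterion of~\cite{Maro2020} recalled in Appendix~\ref{app:CKAM}, namely \eqref{mather_breaking_maro}. I expect the needed form of that criterion to be the following. Along an invariant Lipschitz graph with Birkhoff map $\varphi$, the coefficient $a(x)=h_{22}(\varphi^{-1}(x),x)+h_{11}(x,\varphi(x))$ is bounded below by an explicit function $L(x)>0$. This $L$ is built from the twist coefficient $b(x)=-h_{12}(\varphi^{-1}(x),x)$ and from an upper bound on $|h_{11}|+|h_{22}|$ along the graph, of the form
\[
a(x)>L(x)\ \ge\ \frac{2\,b_*^2}{H_{11}+H_{22}},
\]
where $b_*$ is a lower bound for $b$ and $H_{11},H_{22}$ are upper bounds for $|h_{11}|,|h_{22}|$ on the region containing the graph.

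Since Lemma~\ref{lem: localization0} gives $\Gamma\subset\Omega_\omega$, I will apply this criterion with the quantities in \eqref{eq: H11 H22 b*0}, which are maxima and minima over $\tau\in[\omega-1,\omega+1]$. This yields
\[
a_0(x)>\frac{2\,(b_*^0(\omega))^2}{H_{11}^0(\omega)+H_{22}^0(\omega)}.
\]
It then suffices to prove $b_*^0(\omega)\ge b_{\mathrm{low}}(\omega)>0$ and $H_{11}^0(\omega)+H_{22}^0(\omega)\le H_{\mathrm{up}}(\omega)$. The strict inequality in \eqref{eq: Alow 0} is inherited from the strict inequality in the criterion.

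Both estimates come from the explicit formulas \eqref{eq: hc11 hc22 hc120}, using $2\underline R\le S\le 2\overline R$ and $\omega-1\le\tau\le\omega+1$.

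\emph{Upper bound.} The triangle inequality gives
\[
|\partial_{11}h_0|\le\frac{\|\dot R\|^2+2\overline R\|\ddot R\|}{\omega-1}+\frac{4\overline R\|\dot R\|}{(\omega-1)^2}+\frac{4\overline R^2}{(\omega-1)^3},
\]
and the same bound holds for $|\partial_{22}h_0|$. Summing the two reproduces exactly $H_{\mathrm{up}}(\omega)$.

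\emph{Lower bound.} I write $-\partial_{12}h_0=\frac1\tau\bigl(\frac S\tau+\dot R_0\bigr)\bigl(\frac S\tau-\dot R_1\bigr)$. Each factor is at least $\frac{2\underline R}{\omega+1}-\|\dot R\|$, and $\frac1\tau\ge\frac1{\omega+1}$. The key point is that this quantity is positive. Indeed, $\omega+1<\sigma_0=\underline R/\|\dot R\|$ gives $\frac{2\underline R}{\omega+1}>2\|\dot R\|>\|\dot R\|$. So both factors are positive and the product bound is legitimate, giving $b_*^0\ge b_{\mathrm{low}}$.

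The main obstacle is the first step: extracting from~\cite{Maro2020} the precise form of $L(x)$ and checking that it is monotone in the data. Concretely, $L$ must be nondecreasing in the lower bound for $b$ and nonincreasing in the bounds for $h_{11},h_{22}$, so that substituting the global constants is allowed. If the criterion in the appendix is instead stated pointwise, for instance in terms of $b(x)$, $b(\varphi(x))$ and $|h_{ii}|$ at the neighbouring points, I would first minimize it over $\Omega_\omega$ to reach the displayed constant form and then proceed as above.
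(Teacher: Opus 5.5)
Your two elementary estimates are correct and agree with the end of the paper's proof. The triangle inequality applied to \eqref{eq: hc11 hc22 hc120} gives $H_{11}^0+H_{22}^0\le H_{\mathrm{up}}(\omega)$ exactly. The factor-by-factor bound on $-\partial_{12}h_0$, with each factor positive because $\omega+1<\sigma_0$, gives $b_*^0\ge b_{\mathrm{low}}(\omega)>0$.

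The gap is the step you yourself flag as the main obstacle, and it is the whole converse-KAM content of the lemma. You never derive $a_0(x)>2(b_*^0)^2/(H_{11}^0+H_{22}^0)$; you postulate a version of the criterion in which this bound is already built in. Theorem~\ref{thm: CKAM-maro} is not of that form. It gives $a(x)\ge b(\varphi(x))D^-+b(x)/D^+$, where $D^\pm$ are built from constants $B^\pm$ bounding $a/b$ from above and $C^\pm$ bounding the ratios $b(\varphi(x))/b(x)$ and $b(x)/b(\varphi(x))$ from below, under the hypothesis $(B^\pm)^2-4C^\pm>0$. Passing from this to a constant lower bound is not a matter of monotonicity or of minimizing over $\Omega_\omega$. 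It requires a specific choice of these constants.

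Concretely, the missing argument runs as follows.
\begin{itemize}
\item Take $B^\pm=B_0:=(H_{11}^0+H_{22}^0)/b_*^0$. This is admissible because $a_0(x)\le|\partial_{22}h_0|+|\partial_{11}h_0|\le H_{11}^0+H_{22}^0$, and $b_0\ge b_*^0$ at both $x$ and $\varphi_0(x)$ by Lemma~\ref{lem: localization0}.
\item The natural choice $C^\pm=b_*^0/b_0^*$ satisfies the infimum conditions but may violate the discriminant condition. Already for $h=\tfrac12(t_1-t_0)^2$ one has $B=2$, $C=1$, so $B^2-4C=0$. It must therefore be capped, e.g.\ $C^\pm=\widetilde C_0:=\min\{b_*^0/b_0^*,B_0^2/8\}$.
\item With equal constants, $D^-=2/\bigl(B_0+\sqrt{B_0^2-4\widetilde C_0}\bigr)=1/D^+$. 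The theorem then yields $a_0(x)\ge D^-\bigl(b_0(\varphi_0(x))+b_0(x)\bigr)\ge 2b_*^0D^-$.
\item Finally $D^->1/B_0$ because $\widetilde C_0>0$, which gives $a_0(x)>2(b_*^0)^2/(H_{11}^0+H_{22}^0)$.
\end{itemize}
This is where both the factor $2$ and the strict inequality come from. As stated in Theorem~\ref{thm: CKAM-maro}, the criterion itself gives only the non-strict inequality, so your claim that strictness is inherited from it is misplaced. With this step inserted, your estimates complete the proof.
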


\begin{proof}[Proof of Lemma~\ref{lem: Alow0}]
By Lemma~\ref{lem: localization0}, for every $x\in\mathds R$ the pairs $(\varphi_0^{-1}(x),x)$ and $(x,\varphi_0(x))$ belong to $\Omega_\omega$. Applying~\eqref{eq: ab0} and~\eqref{eq: H11 H22 b*0} we have that $b_0(\cdot)\in[b_*^0(\omega),b_0^*(\omega)]$ and
\[|\partial_{22}h_0(\varphi_0^{-1}(x),x)|\le H_{22}^0(\omega),\qquad|\partial_{11}h_0(x,\varphi_0(x))|\le H_{11}^0(\omega).\]
Hence
\[\frac{a_0(x)}{b_0(\varphi_0(x))}\leq\frac{H_{11}^0(\omega)+H_{22}^0(\omega)}{b_*^0(\omega)},\qquad\frac{a_0(x)}{b_0(x)}\leq\frac{H_{11}^0(\omega)+H_{22}^0(\omega)}{b_*^0(\omega)}.\]
Let
\[B_0(\omega):=\frac{H_{11}^0(\omega)+H_{22}^0(\omega)}{b_*^0(\omega)}>0,\quad C_0(\omega):=\frac{b_*^0(\omega)}{b_0^*(\omega)}\in(0,1],\quad\widetilde C_0(\omega):=\min\left\{C_0(\omega),\frac{B_0(\omega)^2}{8}\right\}>0.\]
Then Theorem~\ref{thm: CKAM-maro} applies with $B^\pm=B_0(\omega)$ and $C^\pm=\widetilde C_0(\omega)$. If we denote by
\[\widetilde D_0^{+}(\omega):=\frac{B_0(\omega)+\sqrt{B_0(\omega)^2-4\widetilde C_0(\omega)}}{2}\]
and
\[\widetilde D_0^{-}(\omega):=\frac{B_0(\omega)-\sqrt{B_0(\omega)^2-4\widetilde C_0(\omega)}}{2\widetilde C_0(\omega)}=\frac{2}{B_0(\omega)+\sqrt{B_0(\omega)^2-4\widetilde C_0(\omega)}},\]
we obtain
\[a_0(x)\geq b_0(\varphi_0(x))\widetilde D_0^-(\omega)+\frac{b_0(x)}{\widetilde D_0^+(\omega)}\qquad\forall x\in\mathds R.\]
Therefore we get, noting that $1/\widetilde D_0^+(\omega)=\widetilde D_0^-(\omega)>1/B_0(\omega)$,
\[a_0(x)\geq b_*^0(\omega)\left(\widetilde D_0^-(\omega)+\frac{1}{\widetilde D_0^+(\omega)}\right)=2b_*^0(\omega)\widetilde D_0^-(\omega)>2\frac{b_*^0(\omega)}{B_0(\omega)}=2\frac{(b_*^0(\omega))^2}{H_{11}^0(\omega)+H_{22}^0(\omega)}.\]
Applying~\eqref{eq: hc11 hc22 hc120},~\eqref{eq: ab0} and~\eqref{eq: H11 H22 b*0}, together with $S\geq2\underline R$ and $\tau\leq\omega+1$, we obtain,
\[b_*^0(\omega)\geq\frac{1}{\omega+1}\left(\frac{2\underline R}{\omega+1}-\|\dot R\|\right)^2=b_{\mathrm{low}}(\omega).\]
Since $\omega\in(1,\sigma_0-1)$, $b_{\mathrm{low}}(\omega)>0$. From~\eqref{eq: hc11 hc22 hc120}, $\tau\geq\omega-1$ and $S\leq2\overline R$. Hence
\[H_{11}^0(\omega)+H_{22}^0(\omega)\leq\frac{2\|\dot R\|^2+4\overline R\|\ddot R\|}{\omega-1}+\frac{8\overline R\|\dot R\|}{(\omega-1)^2}+\frac{8\overline R^2}{(\omega-1)^3}=H_{\mathrm{up}}(\omega),\]
completing the proof.
\end{proof}

\begin{lemma}\label{lem: Aup0}
Let $\Gamma\subset \Omega_\omega$ be an invariant Lipschitz graph of rotation number $\omega\in(1,\sigma_0-1)$, and let $a_0(x)$ be defined in~\eqref{eq: ab0}. Then there exists $x_0\in \mathds R$ such that
\begin{equation}\label{eq: Aup 0}
a_0(x_0)< A_{\mathrm{up}}(\omega,R):=-\frac{2\kappa_R(\underline R+\overline R)}{\omega+1}+\frac{8\overline R^{2}}{(\omega-1)^3},
\end{equation}
where $\kappa_R$ is defined in~\eqref{eq: kappaR}.
\end{lemma}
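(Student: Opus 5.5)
We take $x_0:=\bar t$, the point from~\eqref{eq: kappaR} where $R$ reaches its maximum $\overline R$, so that $\dot R(\bar t)=0$ and $\ddot R(\bar t)<-\kappa_R$. We then evaluate $a_0(\bar t)$ from~\eqref{eq: ab0} using the explicit second derivatives~\eqref{eq: hc11 hc22 hc120}.

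Put $t_{-1}:=\varphi_0^{-1}(\bar t)$ and $t_1:=\varphi_0(\bar t)$. Let $\tau_-:=\bar t-t_{-1}$, $\tau_+:=t_1-\bar t$, $S_-:=R(t_{-1})+\overline R$ and $S_+:=\overline R+R(t_1)$. By Lemma~\ref{lem: localization0} both $\tau_\pm$ lie in $[\omega-1,\omega+1]$. The term $\partial_{22}h_0(t_{-1},\bar t)$ has the point $\bar t$ in the second slot, so it involves $\dot R$ and $\ddot R$ at $\bar t$. Likewise $\partial_{11}h_0(\bar t,t_1)$ has $\bar t$ in the first slot. Since $\dot R(\bar t)=0$, the terms $\dot R^2$ and $2S\dot R/\tau^2$ in both formulas vanish. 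This gives
\[
a_0(\bar t)=\ddot R(\bar t)\left(\frac{S_-}{\tau_-}+\frac{S_+}{\tau_+}\right)+\frac{S_-^2}{\tau_-^3}+\frac{S_+^2}{\tau_+^3}.
\]

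We bound the two groups separately.

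\begin{itemize}
\item \textbf{Curvature terms.} Since $\ddot R(\bar t)<-\kappa_R<0$, these terms are negative, and we need a lower bound on the coefficient. We have $S_\pm\ge \underline R+\overline R$, because one of the two radii is exactly $\overline R$. We also have $\tau_\pm\le\omega+1$. Hence
\[
\ddot R(\bar t)\left(\frac{S_-}{\tau_-}+\frac{S_+}{\tau_+}\right)<-\kappa_R\,\frac{2(\underline R+\overline R)}{\omega+1}.
\]
The inequality is strict because $\ddot R(\bar t)<-\kappa_R$ strictly.

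\item \textbf{Positive terms.} Using $S_\pm\le 2\overline R$ and $\tau_\pm\ge\omega-1$, we get $S_\pm^2/\tau_\pm^3\le 4\overline R^2/(\omega-1)^3$. Summing the two terms gives $8\overline R^2/(\omega-1)^3$.
\end{itemize}

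Adding the two bounds yields $a_0(\bar t)<A_{\mathrm{up}}(\omega,R)$.

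The only delicate point is using $R(\bar t)=\overline R$ to obtain $S_\pm\ge\underline R+\overline R$, which is sharper than $2\underline R$. This is what produces the denominator $2(\underline R+\overline R)$ in~\eqref{eq: sharpened criterion}. If we only assume $\dot R(\bar t)=0$, we can only use $S_\pm\ge 2\underline R$. The same computation then gives the coefficient $4\underline R$ instead, which is consistent with Remark~\ref{rem: cond2}.

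We also need the points $(t_{-1},\bar t)$ and $(\bar t,t_1)$ to lie where $\widetilde h_0=h_0$. This holds because $\Omega_\omega\subset\Omega_{0,\beta}$, so the explicit formulas apply there.
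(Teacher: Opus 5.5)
Your proof is correct and follows the paper's argument essentially line by line. You choose $x_0=\bar t$ and use $\dot R(\bar t)=0$ to reduce $a_0(\bar t)$ to the curvature and cubic terms. You then bound these with $\tau_\pm\in[\omega-1,\omega+1]$ from Lemma~\ref{lem: localization0} and $S_\pm\in[\underline R+\overline R,\,2\overline R]$, which is the same route as the paper. You additionally justify the strictness of the final inequality explicitly, which the paper leaves implicit.
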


\begin{proof}[Proof of Lemma~\ref{lem: Aup0}]
By Definition~\ref{def: function R}, there exists $\bar t\in[0,1)$ such that
\[R(\bar t)=\overline R,\qquad \ddot R(\bar t)<-\kappa_R.\]
%
Set $x_0\equiv\bar t\ (\mathrm{mod}\ 1)$ and let $\varphi_0$ be the lift of the homeomorphism induced by $\Gamma_0$ on the base. Define $t_-:=\varphi_0^{-1}(x_0)$, $t_+:=\varphi_0(x_0)$ and
\[\tau_-:=x_0-t_->0,\qquad\tau_+:=t_+-x_0>0,\qquad S_-:=R(t_-)+R(x_0),\qquad S_+:=R(x_0)+R(t_+).\]
By Lemma~\ref{lem: localization0}, $\tau_\pm\in[\omega-1,\omega+1]$ and $S_\pm\in[\overline R+\underline R,\,2\overline R]$. Using~\eqref{eq: hc11 hc22 hc120} at $x_0=\bar t$ and $\dot R(\bar t)=0$, we obtain
\[a_0(x_0)=\ddot R(\bar t)\left(\frac{S_-}{\tau_-}+\frac{S_+}{\tau_+}\right)+\frac{S_-^2}{\tau_-^3}+\frac{S_+^2}{\tau_+^3}.\]
Since $\ddot R(\bar t)<0$, $S_\pm\geq\overline R+\underline R$ and $\tau_\pm\leq\omega+1$,
\[\ddot R(\bar t)\left(\frac{S_-}{\tau_-}+\frac{S_+}{\tau_+}\right)\leq2\ddot R(\bar t)\frac{\overline R+\underline R}{\omega+1}.\]
Moreover $S_\pm\leq2\overline R$ and $\tau_\pm\geq\omega-1$, hence
\[\frac{S_-^2}{\tau_-^3}+\frac{S_+^2}{\tau_+^3}\leq2\frac{(2\overline R)^2}{(\omega-1)^3}.\]
Substituting~\eqref{eq: kappaR} we obtain~\eqref{eq: Aup 0}, completing the proof.
\end{proof}

\paragraph{Step 3. A nontrivial set of rotation numbers with no invariant graphs.}

Define
\begin{equation}\label{eq: Xi_R0}
\Xi_R:=\left\{\omega\in(1,\sigma_0-1):A_{\mathrm{up}}(\omega,R)<A_{\mathrm{low}}(\omega,R)\right\},
\end{equation}
where $A_{\mathrm{low}}(\omega,R)$ and $A_{\mathrm{up}}(\omega,R)$ are defined in~\eqref{eq: Alow 0} and~\eqref{eq: Aup 0}, respectively.

\begin{proposition}\label{prop: no invariant Lipschitz0}
If $\omega\in\Xi_R$, the map $\mathcal P_0$ admits no invariant Lipschitz graph of rotation number $\omega$.
\end{proposition}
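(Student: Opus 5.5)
The argument is by contradiction and combines Lemmas~\ref{lem: Alow0} and~\ref{lem: Aup0}. Fix $\omega\in\Xi_R$ and suppose that $\mathcal P_0$ admits an invariant Lipschitz graph $\Gamma_0$ of rotation number $\omega$. The first task is to place $\Gamma_0$ in the setting of those lemmas, which are formulated for the global twist map $\widetilde{\mathcal P}_0$ in~\eqref{eq: global twist map0}. The natural route is through the Birkhoff map. By Lemma~\ref{lemma: birk}, $\Gamma_0$ is the graph associated with a lift $\varphi_0$ of an orientation-preserving circle homeomorphism of rotation number $\omega$. Every orbit on $\Gamma_0$ is then the configuration $t_n=\varphi_0^n(x)$, and it satisfies~\eqref{eq: DEL0} with $(t_n,t_{n+1})\in\Omega_0$.

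Using the bounded-deviation property $|\varphi_0(x)-x-\omega|\le 1$, which is essentially the content of Lemma~\ref{lem: localization0}, I will show that all pairs $(x,\varphi_0(x))$ lie in $\Omega_\omega\subset\Omega_{0,\beta}$. On this set $\widetilde h_0=h_0$, so $\Gamma_0$ is also an invariant Lipschitz graph of $\widetilde{\mathcal P}_0$ with the same Birkhoff map. This gives $\Gamma_0\subset\Omega_\omega$, and both lemmas then apply.

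With this in place, Lemma~\ref{lem: Alow0} (which rests on the quantitative criterion of Theorem~\ref{thm: CKAM-maro}) gives
\[
a_0(x)>A_{\mathrm{low}}(\omega,R)\qquad \text{for every }x\in\mathds R.
\]
Lemma~\ref{lem: Aup0} gives a point $x_0$ with $a_0(x_0)<A_{\mathrm{up}}(\omega,R)$. Since $\omega\in\Xi_R$ means $A_{\mathrm{up}}(\omega,R)<A_{\mathrm{low}}(\omega,R)$, combining the two yields
\[
A_{\mathrm{low}}(\omega,R)<a_0(x_0)<A_{\mathrm{up}}(\omega,R)<A_{\mathrm{low}}(\omega,R),
\]
which is a contradiction. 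Hence no such graph exists.

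The main obstacle is the transfer step, namely showing that an invariant graph of $\mathcal P_0$, which a priori lives only in $\Sigma_0$, corresponds to one of $\widetilde{\mathcal P}_0$ lying inside $\Omega_{0,\beta}$. I would handle it with the rotation-number localization: a circle homeomorphism with rotation number $\omega$ satisfies $\omega-1<\varphi_0(x)-x<\omega+1$. I would then check $\omega\pm1\in[\beta,\sigma_0-\beta]$, which holds by the choice of $\beta$. A minor point to verify is that Lemma~\ref{lem: Aup0} uses $\dot R(\bar t)=0$, and this follows automatically because $\bar t$ is a maximum point of $R$.
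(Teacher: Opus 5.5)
Your proposal is correct and follows the paper's proof. Assuming an invariant graph, Lemma~\ref{lem: Alow0} gives $a_0>A_{\mathrm{low}}(\omega,R)$ everywhere and Lemma~\ref{lem: Aup0} gives a point with $a_0(x_0)<A_{\mathrm{up}}(\omega,R)$, which contradicts $A_{\mathrm{up}}<A_{\mathrm{low}}$ for $\omega\in\Xi_R$. Your transfer step is sound: bounded deviation places the pairs $(x,\varphi_0(x))$ in $\Omega_\omega\subset\Omega_{0,\beta}$, where $\widetilde h_0=h_0$, so $\Gamma_0$ is also invariant for $\widetilde{\mathcal P}_0$. The paper leaves this step implicit through Lemma~\ref{lem: localization0}.
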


\begin{proof}[Proof of Proposition~\ref{prop: no invariant Lipschitz0}]
Assume by contradiction that $\mathcal P_0$ admits an invariant Lipschitz graph $\Gamma_0$ of rotation number $\omega\in(1,\sigma_0-1)$. Lemma~\ref{lem: Aup0} gives a point $x_0\in\mathds R$ such that $a_0(x_0)\leq A_{\mathrm{up}}(\omega,R)$, while Lemma~\ref{lem: Alow0} gives $a_0(x)\geq A_{\mathrm{low}}(\omega,R)$ for all $x\in\mathds R$. Since $\omega\in \Xi_R$, these inequalities are incompatible.
\end{proof}

\begin{lemma}\label{lem: XiR nonempty0}
Let $R\in\widetilde{\mathcal R}_0$. Then the set $\Xi_R$ is not empty and contains an open interval $\mathcal I\subset(1,\sigma_0-1)$.
\end{lemma}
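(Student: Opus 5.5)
The plan is to show that the defining inequality of $\Xi_R$ is a rearrangement of the quantity inside the infimum in \eqref{eq: sharpened criterion}. Then the strict inequality assumed for $\kappa_R$, combined with continuity in $\omega$, produces an open interval of admissible rotation numbers.

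\textbf{Step 1: rewrite the condition.} For $\omega\in(1,\sigma_0-1)$, the definitions \eqref{eq: Alow 0} and \eqref{eq: Aup 0} give
\[
A_{\mathrm{up}}(\omega,R)<A_{\mathrm{low}}(\omega,R)
\iff
\frac{8\overline R^2}{(\omega-1)^3}-A_{\mathrm{low}}(\omega,R)<\frac{2\kappa_R(\underline R+\overline R)}{\omega+1}.
\]
Since $\frac{\omega+1}{2(\overline R+\underline R)}>0$, multiplying through by it shows this is equivalent to
\[
F(\omega):=\frac{\omega+1}{2(\overline R+\underline R)}\left(\frac{8\overline R^2}{(\omega-1)^3}-A_{\mathrm{low}}(\omega,R)\right)<\kappa_R.
\]
Hence $\Xi_R=\{\omega\in(1,\sigma_0-1):F(\omega)<\kappa_R\}$.

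\textbf{Step 2: find a point of $\Xi_R$.} Because $R\in\widetilde{\mathcal R}_0$, \eqref{eq: sharpened criterion} states that $\kappa_R>\inf_{\omega\in(1,\sigma_0-1)}F(\omega)$. By the definition of the infimum there is some $\omega_1\in(1,\sigma_0-1)$ with $F(\omega_1)<\kappa_R$, so $\omega_1\in\Xi_R$ and $\Xi_R\neq\emptyset$. Note that we never need the infimum to be attained, so we do not have to analyse the behaviour of $F$ near the endpoints of the interval.

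\textbf{Step 3: openness.} This is the only step that needs care, since it requires continuity of $F$ on $(1,\sigma_0-1)$. The factors $\omega+1$ and $(\omega-1)^{-3}$ are continuous there. The term $A_{\mathrm{low}}=2b_{\mathrm{low}}^2/H_{\mathrm{up}}$ is continuous because $b_{\mathrm{low}}$ and $H_{\mathrm{up}}$ are explicit rational expressions in $\omega$ and $H_{\mathrm{up}}>0$ for $\omega>1$. It follows that $\Xi_R=F^{-1}((-\infty,\kappa_R))$ is an open subset of $(1,\sigma_0-1)$. Since it contains $\omega_1$, it contains an open interval $\mathcal I\ni\omega_1$ with $\mathcal I\subset(1,\sigma_0-1)$. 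In particular $\mathcal I$ contains irrational rotation numbers, which is what Step 4 of the overall argument requires.
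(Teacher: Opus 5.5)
Your proposal is correct and follows essentially the same route as the paper: rewrite $A_{\mathrm{up}}(\omega,R)<A_{\mathrm{low}}(\omega,R)$ as $F(\omega)<\kappa_R$, use the strict inequality $\kappa_R>\inf F$ to find one point of $\Xi_R$, and then use continuity of $F$ to get an open interval around it. The paper additionally proves $F>0$ on $(1,\sigma_0-1)$, which the conclusion does not need; you instead spell out the continuity of $F$, which the paper only asserts.
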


\begin{proof}[Proof of Lemma~\ref{lem: XiR nonempty0}]
Relying on~\eqref{eq: Alow 0} and~\eqref{eq: Aup 0}, it suffices to prove that there exists $\mathcal I\subset\Xi_R$ such that for all $\omega\in\mathcal I$ we have
\[\kappa_R>F(\omega),\]
where
\begin{equation}\label{eq: Fw}
F(\omega):=\frac{\omega+1}{2(\overline R+\underline R)}
\left(\frac{8\overline R^2}{(\omega-1)^3}-A_{\mathrm{low}}(\omega,R)\right),
\end{equation}
and $A_{\mathrm{low}}(\omega,R)$ is defined in~\eqref{eq: Alow 0}.

We first note that $F(\omega)>0$ for every $\omega\in(1,\sigma_0-1)$. Indeed, by~\eqref{eq: Alow 0},
\[A_{\mathrm{low}}(\omega,R)=\frac{2b_{\mathrm{low}}(\omega)^2}{H_{\mathrm{up}}(\omega)},\qquad b_{\mathrm{low}}(\omega)=\frac{1}{\omega+1}\left(\frac{2\underline R}{\omega+1}-\|\dot R\|\right)^2.\]
Since $\omega+1<\sigma_0=\underline R/\|\dot R\|$ (see~\eqref{eq: sigma}), we have $\frac{2\underline R}{\omega+1}-\|\dot R\|>0$, and hence $b_{\mathrm{low}}(\omega)>0$. Moreover,
\[b_{\mathrm{low}}(\omega)<\frac{1}{\omega+1}\left(\frac{2\underline R}{\omega+1}\right)^2=\frac{4\underline R^2}{(\omega+1)^3}\leq\frac{4\overline R^2}{(\omega+1)^3}.\]
On the other hand, by the definition of $H_{\mathrm{up}}(\omega)$,
\[H_{\mathrm{up}}(\omega)=\frac{2\|\dot R\|^2+4\overline R\|\ddot R\|}{\omega-1}+\frac{8\overline R\|\dot R\|}{(\omega-1)^2}+\frac{8\overline R^2}{(\omega-1)^3}>\frac{8\overline R^2}{(\omega-1)^3}.\]
Hence
\[A_{\mathrm{low}}(\omega,R)<\frac{2\cdot16\overline R^4}{(\omega+1)^6}\cdot\frac{(\omega-1)^3}{8\overline R^2}=\frac{4\overline R^2(\omega-1)^3}{(\omega+1)^6}.\]
Since
\[\frac{8\overline R^2}{(\omega-1)^3}>\frac{4\overline R^2(\omega-1)^3}{(\omega+1)^6},\]
it follows that $F(\omega)>0$ for every $\omega\in(1,\sigma_0-1)$.

Set
\[\alpha:=\inf_{\omega\in(1,\sigma_0-1)}F(\omega).\]
By~\eqref{eq: sharpened criterion}, we have $\kappa_R>\alpha$. Let $\delta:=\frac{\kappa_R-\alpha}{2}>0$. By the definition of infimum, there exists $\omega_0\in(1,\sigma_0-1)$ such that
\[F(\omega_0)<\alpha+\delta=\frac{\kappa_R+\alpha}{2}<\kappa_R.\]
Then, by continuity of $F$ at $\omega_0$, there exists $\varepsilon>0$ such that for every $\omega\in(\omega_0-\varepsilon,\omega_0+\varepsilon)$,
\[F(\omega)< F(\omega_0) + \delta <\kappa_R.\]
Shrinking $\varepsilon$ if necessary, we may assume
\[\mathcal I:=(\omega_0-\varepsilon,\omega_0+\varepsilon)\subset(1,\sigma_0-1).\]
Hence $\mathcal I\subset\Xi_R$, completing the proof.
\end{proof}

\paragraph{Step 4. Proof of Theorem~\ref{thm: chaos FU}.}
By Lemma~\ref{lem: XiR nonempty0}, the set $\Xi_R$ in~\eqref{eq: Xi_R0} contains a nonempty open interval $\mathcal I\subset(1,\sigma_0-1)$. Choose $\omega_*\in\mathcal I\cap(\mathds R\setminus\mathds Q).$ By Propostion \ref{prop: rotation numbers0} there exists a compact minimizing $\mathcal P_0$-invariant set $M_{\omega_*,0}$ with rotation number $\omega_*$ which, by Proposition~\ref{prop: no invariant Lipschitz0}, is a Cantor set.

Therefore Theorem~\ref{thm: CKAM-forni} yields an invariant ergodic probability measure $\mu_{\omega_*}$ for $\mathcal P_0$ with rotation number $\omega_*$ and positive metric entropy. In particular, $\mathcal P_0$ has positive topological entropy.

This completes the proof of Theorem~\ref{thm: chaos FU}.

\subsection{The breathing circle billiard and proof of Theorem~\ref{thm: chaos}}\label{subsec: Theorem13}

We now prove Theorem~\ref{thm: chaos}. The argument follows the same structure as the proof of Theorem~\ref{thm: chaos FU}, with the generating function $h_c$ treated as a small perturbation of $h_0$ through Lemma~\ref{lem: C2 convergence}.

Throughout this section we assume that $R\in\widetilde{\mathcal R}_B$  in the sense of Definition~\ref{def: function R} and we consider the generating function $h_c$ defined in~\eqref{eq: generating function c} on the strip
$\Omega_B$ defined in~\eqref{eq: Omega}.

Fix \(\omega\in(1,\sigma_B-1)\) with $\sigma_B$ defined in~\eqref{eq: sigma}, choose $0<\beta<\min\{\omega-1,\sigma_B-\omega-1\}$
and consider the compact strip
\begin{equation}\label{eq: Omega Bbeta}
\Omega_{B,\beta}:=
\{(t_0,t_1)\in\mathds R^2:\beta\le t_1-t_0\le \sigma_B-\beta\}
\subset\Omega_B.
\end{equation}
Extending $h_c$ outside $\Omega_{B,\beta}$ via Lemma~\ref{lem: extension} leads to the extended map $\widetilde {\mathcal P}_c$ of $\mathcal P_c$ defined in~\eqref{eq: billiard map}. The following proposition, whose proof is completely analogous to the one in Proposition~\ref{prop: rotation numbers0}, provides the Aubry-Mather sets available for $\mathcal P_c$.

\begin{proposition}
\label{prop: rotation number c}
Fix $\varepsilon \in(0,1)$. For every $\omega\in(1,\sigma_B-1)$ and $c\in \left(0,\varepsilon\frac{\underline R^2}{\sigma_B}\right)$ the map $\mathcal P_c$ in~\eqref{eq: billiard map} admits:
\begin{itemize}
\item if $\omega=p/q\in\mathds Q$ in lowest terms, a $q$-periodic
minimizing orbit $(t_n,K_n)$ with $(t_{n+q},K_{n+q})=(t_n+p,K_n)$ for all $n\in\mathds Z$;
\item if $\omega\notin\mathds Q$, a compact minimizing invariant set $M_{\omega,c}$ of rotation number $\omega$, contained in a Lipschitz graph over $\mathds T$, whose projection to the base is either all of $\mathds T$ (invariant-curve case) or a Cantor subset (Cantor case).
\end{itemize}
\end{proposition}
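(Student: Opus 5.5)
The plan is to mirror the proof of Proposition~\ref{prop: rotation numbers0}, replacing $h_0$, $\Omega_0$, $\sigma_0$ by $h_c$, $\Omega_B$, $\sigma_B$. Fix $\varepsilon\in(0,1)$, $c\in(0,\varepsilon\underline R^2/\sigma_B)$, $\omega\in(1,\sigma_B-1)$ and $\beta$ as in~\eqref{eq: Omega Bbeta}.

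\textbf{Step 1: check that $h_c$ is well behaved on the compact strip.} On $\Omega_{B,\beta}$ we have $c\tau<c\sigma_B<\varepsilon\underline R^2\le\varepsilon R_0R_1$. Hence
\[
R_0^2R_1^2-c^2\tau^2\ge(1-\varepsilon^2)\underline R^4>0,
\]
so $h_c$ is $C^2$ there, uniformly in $t_0$. Diagonal periodicity and the twist inequality $\partial_{12}h_c<0$ hold on $\Omega_B$ by~\eqref{eq: diagonal periodic h}, quoted from~\cite{BonannoMaro2022}. By compactness modulo the diagonal translation, $-\partial_{12}h_c$ is then bounded below by a positive constant on $\Omega_{B,\beta}$. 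This is exactly what Lemma~\ref{lem: extension} needs. It yields a $C^2$ diagonally periodic extension $\widetilde h_c$ with $\widetilde h_c=h_c$ on $\Omega_{B,\beta}$, $\partial_{12}\widetilde h_c<0$ on $\mathds R^2$, and quadratic tails. So $\widetilde h_c$ generates a global exact symplectic twist diffeomorphism $\widetilde{\mathcal P}_c$ of $\mathds T\times\mathds R$.

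\textbf{Step 2: apply Aubry--Mather theory globally.} Theorem~\ref{thm: Aubry-Mather} applied to $\widetilde{\mathcal P}_c$ gives the two alternatives of the statement for the extended map:
\begin{itemize}
\item for $\omega=p/q$, a minimizing periodic configuration of type $(p,q)$;
\item for $\omega\notin\mathds Q$, a compact minimizing invariant set contained in a Lipschitz graph, whose projection is either $\mathds T$ or a Cantor set.
\end{itemize}

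\textbf{Step 3: localize the minimizers.} By the bounded-deviation property of Lemma~\ref{lem: increment}, every minimizing configuration $(t_n)$ of rotation number $\omega$ satisfies $|t_{n+1}-t_n-\omega|\le 1$. Hence $t_{n+1}-t_n\in[\omega-1,\omega+1]\subset[\beta,\sigma_B-\beta]$ by the choice of $\beta$, so every pair $(t_n,t_{n+1})$ lies in $\Omega_{B,\beta}$. There $\widetilde h_c=h_c$, so the configuration solves~\eqref{eq: DEL} and is minimizing for $h_c$. Proposition~\ref{prop: DEL orbit correspondence} then turns it into a genuine orbit of $\mathcal P_c$.

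\textbf{Step 4: transfer the invariant sets.} The momenta are $K_n=\partial_1h_c(t_n,t_{n+1})$, so the periodic orbit and the set $M_{\omega,c}$ are invariant under $\mathcal P_c$ and lie in the region where $\mathcal P_c=\widetilde{\mathcal P}_c$. The Lipschitz-graph property and the dichotomy (whole $\mathds T$ versus Cantor projection) are statements about the base projection, which is unchanged. They therefore carry over from $\widetilde{\mathcal P}_c$ to $\mathcal P_c$.

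\textbf{Main obstacle.} The only delicate point is Step 1: making sure the $c$-dependent hypotheses of Lemma~\ref{lem: extension} hold for every admissible $c$, not just for small $c$. This means the uniform lower bound on $-\partial_{12}h_c$ on $\Omega_{B,\beta}$ and the $C^2$ regularity, i.e.\ the square root staying away from zero. I would rely on the bound $c\tau<\varepsilon\underline R^2$ above together with the twist property from~\cite{BonannoMaro2022}, which is where the definition of $\sigma_B$, and its second term involving $\|\tfrac{d^2}{dt^2}R^2\|$, is used.
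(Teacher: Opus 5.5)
Your proposal is correct and follows the paper's route. The paper proves this proposition by declaring it completely analogous to Proposition~\ref{prop: rotation numbers0}: extend $h_c$ via Lemma~\ref{lem: extension}, apply Theorem~\ref{thm: Aubry-Mather} to $\widetilde{\mathcal P}_c$, keep minimizers inside $\Omega_{B,\beta}$ with Lemma~\ref{lem: increment}, and pass to orbits of $\mathcal P_c$ through Proposition~\ref{prop: DEL orbit correspondence}. Your Step 1 check that $R_0^2R_1^2-c^2\tau^2\ge(1-\varepsilon^2)\underline R^4$ for every admissible $c$ makes explicit a detail the paper leaves implicit.
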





For every $\omega\in(1,\sigma_B-1)$ let
\begin{equation}\label{eq: Omega-omega-B}
\Omega_\omega^B:=
\{(t_0,t_1)\in\mathds R^2:
\omega-1\le t_1-t_0\le \omega+1\}
\subset\Omega_{B,\beta}\subset\Omega_B .
\end{equation}
The next lemma is the analogue of Lemma~\ref{lem: localization0}, and also follows as a consequence of Lemma~\ref{lemma: birk}.

\begin{lemma}\label{lem: localization}
Let $\Gamma_c$ be an invariant Lipschitz graph for the global twist map $\widetilde{\mathcal P}_c$, and let
$\varphi_c:\mathds R\to\mathds R$ be the corresponding Birkhoff map. If the rotation number of $\varphi_c$ is $\omega\in(1,\sigma_B-1)$, then
\[(x,\varphi_c(x))\in\Omega_\omega^B
\qquad \forall x\in\mathds R .\]
With some abuse of notation we will also write $\Gamma_c\subset\Omega_\omega^B$.
\end{lemma}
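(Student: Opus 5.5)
The plan is to derive the statement from Lemma~\ref{lemma: birk}, exactly as for Lemma~\ref{lem: localization0}. The only point to verify is that the hypotheses of that lemma hold for the extended generating function $\widetilde h_c$. By Lemma~\ref{lem: extension}, $\widetilde h_c$ is a $C^2$ diagonally periodic function on $\mathds R^2$. It satisfies $\partial_{12}\widetilde h_c<0$ everywhere and agrees with $h_c$ on $\Omega_{B,\beta}$. Hence $\widetilde{\mathcal P}_c$ is a global exact symplectic twist diffeomorphism of $\mathds T\times\mathds R$. An invariant Lipschitz graph $\Gamma_c$ then yields, through Lemma~\ref{lemma: birk}, a Birkhoff map $\varphi_c$. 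This map is an increasing homeomorphism of $\mathds R$ and lifts a circle homeomorphism, so $\varphi_c(x+1)=\varphi_c(x)+1$. Its rotation number is $\omega=\lim_n(\varphi_c^n(x)-x)/n$.

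The key step is the classical bounded-deviation estimate for lifts of circle homeomorphisms: $|\varphi_c(x)-x-\omega|<1$ for all $x\in\mathds R$. I would prove it as follows. Set $g(x):=\varphi_c(x)-x$, which is continuous and $1$-periodic, and let $m:=\min g$ and $M:=\max g$.

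First, $M-m<1$. Suppose $g(y)-g(x)\ge1$ for some $x,y$. By periodicity we may take $x\le y<x+1$. Monotonicity of $\varphi_c$ gives $\varphi_c(y)<\varphi_c(x+1)=\varphi_c(x)+1$. This means $g(y)<g(x)+1-(y-x)\le g(x)+1$, a contradiction.

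Second, $m\le\omega\le M$. Iterating gives $\varphi_c^n(x)-x=\sum_{k=0}^{n-1}g(\varphi_c^k(x))\in[nm,nM]$, and dividing by $n$ yields the claim.

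Combining the two facts, $|g(x)-\omega|\le M-m<1$. Therefore $\omega-1<\varphi_c(x)-x<\omega+1$, that is, $(x,\varphi_c(x))\in\Omega^B_\omega$. If Lemma~\ref{lemma: birk} already states the deviation bound, this step is simply a citation.

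Finally, I would check the inclusions $\Omega^B_\omega\subset\Omega_{B,\beta}\subset\Omega_B$. These hold because $\beta<\omega-1$ and $\omega+1<\sigma_B-\beta$. This shows that the pairs $(x,\varphi_c(x))$ lie where $\widetilde h_c=h_c$, which justifies the abuse of notation $\Gamma_c\subset\Omega^B_\omega$.

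I do not expect a genuine obstacle. The only delicate point is making sure that $\varphi_c$ is indeed an orientation-preserving lift with the degree-one property $\varphi_c(x+1)=\varphi_c(x)+1$. For an invariant graph of a twist map this follows from the twist condition, and it is part of the content of Lemma~\ref{lemma: birk}.
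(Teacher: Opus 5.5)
Your proposal is correct and follows the paper's route: the paper obtains the lemma directly from the bounded-deviation estimate \eqref{eq: bounded deviation varphi} in Lemma~\ref{lemma: birk} with $n=1$, which gives $\omega-1\le\varphi_c(x)-x\le\omega+1$. Your self-contained argument via $g(x)=\varphi_c(x)-x$ (which even yields the strict bound) is the standard proof of that cited inequality, so that step can be replaced by the citation, as you anticipated.
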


Assume now that $\mathcal P_c$ has an invariant Lipschitz graph $\Gamma_c$ with rotation number $\omega\in(1,\sigma_B-1)$. Let $\varphi_c$ be its Birkhoff map. We define
\begin{equation}\label{eq:abc}
a_c(x):=\partial_{22}h_c(\varphi_c^{-1}(x),x)+\partial_{11}h_c(x,\varphi_c(x)),\qquad b_c(x):=-\partial_{12}h_c(\varphi_c^{-1}(x),x).
\end{equation}
By the twist condition and the diagonal periodicity of $h_c$ in~\eqref{eq: diagonal periodic h}, we have that $b_c(x)>0$ and both $a_c$ and $b_c$ are $1$-periodic.

The following two lemmas are the perturbative counterparts of Lemmas~\ref{lem: Alow0} and~\ref{lem: Aup0}, respectively.

\begin{lemma}
\label{lem: Alowc}
Let $\Gamma_c\subset\Omega_\omega^B$ be an invariant Lipschitz graph of rotation number $\omega\in(1,\sigma_B-1)$, and let $a_c$ be defined by \eqref{eq:abc}. Then, uniformly in $x\in \mathds R$ (but not in $\omega$)
\begin{equation}\label{eq: Alowc}
a_c(x)> A_{\mathrm{low}}(\omega,R) + \mathcal O(c),
\end{equation}
where $A_{\mathrm{low}}$ is the function defined in~\eqref{eq: Alow 0} restricted to $\omega\in(1,\sigma_B-1)$.
\end{lemma}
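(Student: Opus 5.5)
The plan is to repeat the proof of Lemma~\ref{lem: Alow0} with $h_c$ in place of $h_0$. The $\mathcal O(c)$ error will enter only through the four constants $H_{11}$, $H_{22}$, $b_*$, $b^*$, and Lemma~\ref{lem: C2 convergence} controls these.

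\textbf{Step 1: the constants for $h_c$.} For $\omega\in(1,\sigma_B-1)$ I define
\[
H_{11}^c(\omega),\quad H_{22}^c(\omega),\quad b_*^c(\omega),\quad b^*_c(\omega)
\]
exactly as in \eqref{eq: H11 H22 b*0}, with $h_0$ replaced by $h_c$ and the maxima and minima taken over $t\in\mathds T$, $\tau\in[\omega-1,\omega+1]$. This compact region lies in $\Omega_\omega^B\subset\Omega_B$. By Lemma~\ref{lem: C2 convergence}, $h_c=h_0+\mathcal O_{C^2}(c)$ uniformly on $\Omega_B$, so each second derivative of $h_c$ differs from that of $h_0$ by $\mathcal O(c)$ uniformly. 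Therefore
\[
H_{ii}^c(\omega)=H_{ii}^0(\omega)+\mathcal O(c),\qquad b_*^c(\omega)=b_*^0(\omega)+\mathcal O(c),\qquad b^*_c(\omega)=b_0^*(\omega)+\mathcal O(c).
\]
Since $\Omega_B\subset\Omega_0$, the bounds $b_*^0\ge b_{\mathrm{low}}(\omega)>0$ and $H_{11}^0+H_{22}^0\le H_{\mathrm{up}}(\omega)$ from the proof of Lemma~\ref{lem: Alow0} remain valid.

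\textbf{Step 2: apply the criterion.} By Lemma~\ref{lem: localization}, $(\varphi_c^{-1}(x),x)$ and $(x,\varphi_c(x))$ lie in $\Omega^B_\omega$, so $b_c(x)\in[b_*^c,b^*_c]$ and $|a_c(x)|\le H^c_{11}+H^c_{22}$. For $c$ small, $b_*^c>0$, so the quantities
\[
B_c:=\frac{H^c_{11}+H^c_{22}}{b^c_*},\qquad \widetilde C_c:=\min\Bigl\{\frac{b_*^c}{b^*_c},\frac{B_c^2}{8}\Bigr\}
\]
are well defined and positive. Theorem~\ref{thm: CKAM-maro} then applies verbatim. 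As in Lemma~\ref{lem: Alow0} it yields
\[
a_c(x)>\frac{2(b_*^c)^2}{H^c_{11}+H^c_{22}}
\]
for all $x$.

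\textbf{Step 3: pass to $A_{\mathrm{low}}$.} The map $(u,v)\mapsto 2u^2/v$ is smooth near $(b_*^0,\,H_{11}^0+H_{22}^0)$, where $v>0$. So
\[
\frac{2(b_*^c)^2}{H_{11}^c+H_{22}^c}=\frac{2(b_*^0)^2}{H_{11}^0+H_{22}^0}+\mathcal O(c)\ \ge\ \frac{2b_{\mathrm{low}}^2}{H_{\mathrm{up}}}+\mathcal O(c)=A_{\mathrm{low}}(\omega,R)+\mathcal O(c).
\]
This gives \eqref{eq: Alowc}. The constant in the $\mathcal O(c)$ depends on $\omega$ through the lower bound on $b_*^0(\omega)$ and on $H^0_{11}+H^0_{22}$, which is why the statement is uniform in $x$ but not in $\omega$.

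\textbf{Main obstacle.} The delicate point is justifying uniformity in Lemma~\ref{lem: C2 convergence}. The $C^2$ expansion of $\sqrt{R_0^2R_1^2-c^2\tau^2}$ requires $R_0^2R_1^2-c^2\tau^2$ to stay bounded away from zero. On $\Omega^B_\omega$ we have $\tau\le\sigma_B$ and $c<\varepsilon\underline R^2/\sigma_B$, so this quantity is at least $(1-\varepsilon^2)\underline R^4>0$. I would verify this explicitly, together with the fact that derivatives of $R$ up to order two are bounded (since $R\in C^2$), so the second derivatives of $h_c$ are indeed $\mathcal O(c)$-close to those of $h_0$.
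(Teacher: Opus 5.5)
Your proposal is correct and follows the paper's route. Lemma~\ref{lem: localization} places the relevant pairs in $\Omega_\omega^B$, Lemma~\ref{lem: C2 convergence} perturbs the constants in \eqref{eq: H11 H22 b*0} by $\mathcal O(c)$, and Theorem~\ref{thm: CKAM-maro} is then rerun exactly as in Lemma~\ref{lem: Alow0}. Your Step~3 (continuity of $(u,v)\mapsto 2u^2/v$ to pass from the $h_c$-constants to $A_{\mathrm{low}}(\omega,R)$) and your check that $R_0^2R_1^2-c^2\tau^2\ge(1-\varepsilon^2)\underline R^4$ just make explicit details the paper leaves implicit.
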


\begin{proof}[Proof of Lemma~\ref{lem: Alowc}]
By Lemma \ref{lem: localization}, the pairs $(\varphi_c^{-1}(x),x)$ and $(x,\varphi_c(x))$ belong to $\Omega_\omega^B$ for
every $x\in\mathds R$. On this compact strip, Lemma~\ref{lem: C2 convergence} gives
\[\partial_{ij}h_c=\partial_{ij}h_0+\mathcal O(c),\qquad i,j\in\{1,2\},\]
uniformly as $c\to0^+$.

Therefore the quantities in~\eqref{eq: H11 H22 b*0} (now restricted to $\omega\in(1,\sigma_B-1)$) are perturbed by $\mathcal O(c)$ when $h_0$ is replaced by $h_c$. Applying Theorem~\ref{thm: CKAM-maro} exactly as in the proof of Lemma~\ref{lem: Alow0} then gives the lower bound \eqref{eq: Alowc} and completes the proof.
\end{proof}

\begin{lemma}
\label{lem: Aupc}
Let $\Gamma_c\subset\Omega_\omega^B$ be an invariant Lipschitz graph of rotation number $\omega\in(1,\sigma_B-1)$, and let $a_c$ be defined by \eqref{eq:abc}. Then there exist $x_0\in\mathds R$  such that
\begin{equation}\label{eq: Aupc}
a_c(x_0)< A_{\mathrm{up}}(\omega,R)+ \mathcal O(c),
\end{equation}
where $A_{\mathrm{up}}$ is the function defined in \eqref{eq: Aup 0}, restricted to $\omega\in(1,\sigma_B-1)$ and the infimum in the definition of $\kappa_R$ in~\eqref{eq: kappaR} is taken over $\omega\in(1,\sigma_B-1)$.
\end{lemma}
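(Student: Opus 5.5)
The plan is to reduce to the unperturbed computation in Lemma~\ref{lem: Aup0} via Lemma~\ref{lem: C2 convergence}. I would choose the same point as there: $x_0\equiv\bar t \pmod 1$, where $\bar t$ is given by~\eqref{eq: kappaR}, so that $R(\bar t)=\overline R$, $\dot R(\bar t)=0$ (since $\bar t$ is a maximum point) and $\ddot R(\bar t)<-\kappa_R$. I then set $t_-:=\varphi_c^{-1}(x_0)$, $t_+:=\varphi_c(x_0)$ and $\tau_\pm$, $S_\pm$ as in the proof of Lemma~\ref{lem: Aup0}. By Lemma~\ref{lem: localization}, the pairs $(t_-,x_0)$ and $(x_0,t_+)$ lie in the compact strip $\Omega_\omega^B\subset\Omega_B$. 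Hence $\tau_\pm\in[\omega-1,\omega+1]$ and $S_\pm\in[\overline R+\underline R,2\overline R]$.

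The key step is to compare $a_c(x_0)$ with the corresponding expression built from $h_0$ at the same pairs. Write
\[
a_c(x_0)=\partial_{22}h_0(t_-,x_0)+\partial_{11}h_0(x_0,t_+)+\bigl[\partial_{22}(h_c-h_0)(t_-,x_0)+\partial_{11}(h_c-h_0)(x_0,t_+)\bigr].
\]
Lemma~\ref{lem: C2 convergence} gives $h_c=h_0+\mathcal O_{C^2}(c)$ uniformly on $\Omega_B$. Therefore the bracket is $\mathcal O(c)$, with a constant independent of the graph $\Gamma_c$ and of the point. The only care needed is that $t_\pm$ depend on $c$ through $\varphi_c$. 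This is harmless, because the estimate is uniform over the whole strip $\Omega_\omega^B$.

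For the main term, I use formulas~\eqref{eq: hc11 hc22 hc120} together with $\dot R(\bar t)=0$. This gives exactly
\[
\ddot R(\bar t)\left(\frac{S_-}{\tau_-}+\frac{S_+}{\tau_+}\right)+\frac{S_-^2}{\tau_-^3}+\frac{S_+^2}{\tau_+^3}.
\]
The first term is bounded above by $-2\kappa_R(\overline R+\underline R)/(\omega+1)$, using $\ddot R(\bar t)<-\kappa_R$, $S_\pm\ge\overline R+\underline R$ and $\tau_\pm\le\omega+1$. The second part is bounded by $8\overline R^2/(\omega-1)^3$, using $S_\pm\le2\overline R$ and $\tau_\pm\ge\omega-1$. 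Thus the $h_0$-part is strictly less than $A_{\mathrm{up}}(\omega,R)$, and adding the $\mathcal O(c)$ remainder yields~\eqref{eq: Aupc}.

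I expect the only real obstacle to be justifying the uniformity of the $\mathcal O(c)$ term. Lemma~\ref{lem: C2 convergence} is stated on $\Omega_B$, and the second derivatives of $\sqrt{R_0^2R_1^2-c^2\tau^2}$ require $R_0^2R_1^2-c^2\tau^2$ to stay bounded away from zero. This holds because $c<\varepsilon\underline R^2/\sigma_B$ and $\tau<\sigma_B$, which give $R_0^2R_1^2-c^2\tau^2\ge(1-\varepsilon^2)\underline R^4>0$. With this in hand, the remainder is bounded by $Cc$, with $C$ depending only on $R$, $\varepsilon$ and the strip. The remark about taking the infimum over $(1,\sigma_B-1)$ only concerns the admissible range of $\omega$, and it does not affect the argument above.
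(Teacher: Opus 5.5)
Your proposal is correct and follows the paper's proof. The paper also takes $x_0\equiv\bar t \pmod 1$, uses Lemma~\ref{lem: localization} to place $(\varphi_c^{-1}(x_0),x_0)$ and $(x_0,\varphi_c(x_0))$ in $\Omega_\omega^B$, and then says the rest is analogous to Lemma~\ref{lem: Aup0}. Your splitting of $a_c(x_0)$ into the $h_0$-expression plus a remainder, with that remainder bounded uniformly on $\Omega_\omega^B$ via Lemma~\ref{lem: C2 convergence} and $R_0^2R_1^2-c^2\tau^2\ge(1-\varepsilon^2)\underline R^4$, simply makes explicit the step the paper leaves implicit.
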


\begin{proof}[Proof of Lemma~\ref{lem: Aupc}]
Choosing $x_0 \equiv \bar t \mod 1$ where $\bar t$ is defined in~\eqref{eq: kappaR} implies that both $(x_0,\varphi_c(x_0))$ and $(\varphi_c^{-1}(x_0),x_0)$ belong to $\Omega_\omega^B$. The rest of the proof is completely analogous to the one of Lemma~\ref{lem: Aup0}.
\end{proof}

We now define the set of rotation numbers for which the two previous estimates are incompatible:
\begin{equation}\label{eq:XiRc}
\Xi_R^B:=\left\{\omega\in(1,\sigma_B-1):A_{\mathrm{up}}(\omega,R)<A_{\mathrm{low}}(\omega,R)\right\}\subset \Xi_R,
\end{equation}
where $A_{\mathrm{low}}(\omega,R)$, $A_\mathrm{up}(\omega,R)$ and $\Xi_R$ are defined in~\eqref{eq: Alow 0},~\eqref{eq: Aup 0} and~\eqref{eq: Xi_R0}, respectively.
\begin{proposition}\label{prop: no invariant Lipschitzc}
If $\omega\in\Xi_R^B$, then there exists $c_\omega>0$ such that, for every $c\in(0,c_\omega)$, the map $\mathcal P_c$ admits no invariant Lipschitz graph with rotation number $\omega$.
\end{proposition}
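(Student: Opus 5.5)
Fix $\omega\in\Xi_R^B$ and set $\eta:=A_{\mathrm{low}}(\omega,R)-A_{\mathrm{up}}(\omega,R)>0$, which is strictly positive by the definition \eqref{eq:XiRc}. The plan is to argue by contradiction, exactly as in Proposition~\ref{prop: no invariant Lipschitz0}. The only new point is that the error terms $\mathcal O(c)$ in Lemmas~\ref{lem: Alowc} and~\ref{lem: Aupc} must be uniform over \emph{all} possible invariant graphs of rotation number $\omega$, and not merely over $x$.

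\textbf{Step 1: uniformity of the errors.} By Lemma~\ref{lem: localization}, any invariant Lipschitz graph $\Gamma_c$ of rotation number $\omega$ satisfies $(x,\varphi_c(x))\in\Omega_\omega^B$ for all $x$. This strip is fixed and independent of $c$ and of $\Gamma_c$; by diagonal periodicity it is effectively compact. By Lemma~\ref{lem: C2 convergence} there is a constant $M_\omega$ with $|\partial_{ij}h_c-\partial_{ij}h_0|\le M_\omega c$ on $\Omega_\omega^B$ for all $c$ small.

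\textbf{Step 2: the two perturbed bounds.} In the proof of Lemma~\ref{lem: Aupc}, the upper bound at $x_0\equiv\bar t$ uses only pointwise values of $\partial_{11}h_c,\partial_{22}h_c$ at pairs in $\Omega_\omega^B$. Hence $a_c(x_0)<A_{\mathrm{up}}(\omega,R)+2M_\omega c$.

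For the lower bound, the quantities $H^c_{11},H^c_{22},b^c_*,b_c^*$ are defined as max/min over the same strip. They are therefore within $M_\omega c$ of their $c=0$ counterparts. Since $b_*^0(\omega)\ge b_{\mathrm{low}}(\omega)>0$, the map $(H,b)\mapsto 2b^2/H$ used in the proof of Lemma~\ref{lem: Alow0} is Lipschitz near these values. Running Theorem~\ref{thm: CKAM-maro} verbatim then gives
\[
a_c(x)>2\frac{(b^c_*)^2}{H^c_{11}+H^c_{22}}\ge A_{\mathrm{low}}(\omega,R)-M'_\omega c
\]
for all $x$, with $M'_\omega$ independent of $\Gamma_c$. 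One small check is needed here: the strict inequality $\widetilde D^-> 1/B$ and the admissibility of $\widetilde C$ survive for small $c$, which holds because $B_0,C_0>0$.

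\textbf{Step 3: conclusion.} Choose $c_\omega>0$ such that $c_\omega<\varepsilon\underline R^2/\sigma_B$, that Lemma~\ref{lem: C2 convergence} applies, and that $(2M_\omega+M'_\omega)c_\omega<\eta$. If $\Gamma_c$ existed for some $c\in(0,c_\omega)$, we would get
\[
A_{\mathrm{low}}-M'_\omega c<a_c(x_0)<A_{\mathrm{up}}+2M_\omega c,
\]
that is, $\eta<(2M_\omega+M'_\omega)c$, which is a contradiction.

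The main obstacle is Step 1/2: making explicit that the $\mathcal O(c)$ constants depend only on $\omega$ (through the strip $\Omega_\omega^B$) and not on the unknown graph. This holds because every estimate in Lemmas~\ref{lem: Alow0} and~\ref{lem: Aup0} is expressed through sup/inf of second derivatives over $\Omega_\omega^B$.
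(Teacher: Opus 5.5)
Your proposal is correct and follows the paper's route: combine the perturbed lower bound of Lemma~\ref{lem: Alowc} with the perturbed upper bound of Lemma~\ref{lem: Aupc} at $x_0\equiv\bar t$, and take $c$ small relative to the gap $A_{\mathrm{low}}(\omega,R)-A_{\mathrm{up}}(\omega,R)>0$. The paper phrases this as a contradiction along $c\to0^+$ and leaves implicit that the $\mathcal O(c)$ constants depend only on $\omega$ (through $\Omega_\omega^B$) and not on the graph $\Gamma_c$; your explicit constants $M_\omega, M'_\omega$ make that uniformity precise.
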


\begin{proof}[Proof of Proposition~\ref{prop: no invariant Lipschitzc}]
Fix $\omega\in\Xi_R^B$ and suppose by contradiction that for every $\varepsilon>0$ there exist $c\in(0,\varepsilon)$ such that the map $\mathcal P_c$ admits an invariant Lipschitz graph $\Gamma_c$ of rotation number $\omega$. By Lemma~\ref{lem: Alowc} for all $x\in\mathds R$ we have $a_c(x)> A_{\mathrm{low}}(\omega,R)+\mathcal{O}(c)$. On the other hand, Lemma~\ref{lem: Aupc} gives a point $x_0\in\mathds R$ such that $a_c(x_0)< A_{\mathrm{up}}(\omega,R)+\mathcal{O}(c)$. For $\varepsilon\rightarrow 0$ this is only possible if   
$A_{\mathrm{up}}(\omega,R)>A_{\mathrm{low}}(\omega,R)$, in contradiction whit the fact that $\omega\in\Xi_R^B$.
\end{proof}

\begin{lemma}
\label{lem: XiR nonempty}
Let \(R\in\widetilde{\mathcal R}_B\). Then the set \(\Xi_R^B\) is not empty and contains an open
interval \(\mathcal I_B\subset(1,\sigma_B-1)\).
\end{lemma}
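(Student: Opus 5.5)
The argument mirrors the proof of Lemma~\ref{lem: XiR nonempty0}, with the interval $(1,\sigma_0-1)$ replaced by $(1,\sigma_B-1)$. By the definitions~\eqref{eq: Alow 0} and~\eqref{eq: Aup 0}, the inequality $A_{\mathrm{up}}(\omega,R)<A_{\mathrm{low}}(\omega,R)$ is equivalent, after multiplying by the positive factor $\frac{\omega+1}{2(\overline R+\underline R)}$, to $\kappa_R>F(\omega)$, where $F$ is the function in~\eqref{eq: Fw}. It therefore suffices to find an open interval $\mathcal I_B\subset(1,\sigma_B-1)$ on which $F<\kappa_R$.

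First I check that $F$ is well defined and continuous on $(1,\sigma_B-1)$. Because $\sigma_B\le \underline R/(2\|\dot R\|)<\sigma_0$, we have $(1,\sigma_B-1)\subset(1,\sigma_0-1)$. Consequently $b_{\mathrm{low}}(\omega)>0$ and $H_{\mathrm{up}}(\omega)>0$ there, and both are continuous in $\omega$. Hence $A_{\mathrm{low}}$, and with it $F$, is continuous on $(1,\sigma_B-1)$. The positivity $F>0$ proved in Lemma~\ref{lem: XiR nonempty0} holds on the larger interval, so it holds here as well; this also gives the positivity part of~\eqref{eq: sharpened criterion}. The interval $(1,\sigma_B-1)$ is nonempty because $R\in\mathcal R_B$ means $\sigma_B>2$.

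Next set $\alpha_B:=\inf_{\omega\in(1,\sigma_B-1)}F(\omega)$. For $R\in\widetilde{\mathcal R}_B$, condition~\eqref{eq: sharpened criterion} is taken with the infimum over $(1,\sigma_B-1)$, so $\kappa_R>\alpha_B$. Put $\delta:=(\kappa_R-\alpha_B)/2$. By the definition of the infimum there is $\omega_0\in(1,\sigma_B-1)$ with $F(\omega_0)<\alpha_B+\delta<\kappa_R$. By continuity of $F$ at $\omega_0$ there is $\eta>0$ such that $F(\omega)<\kappa_R$ for $|\omega-\omega_0|<\eta$. Shrinking $\eta$ so that $\mathcal I_B:=(\omega_0-\eta,\omega_0+\eta)\subset(1,\sigma_B-1)$, which is possible because this interval is open, every $\omega\in\mathcal I_B$ satisfies $A_{\mathrm{up}}<A_{\mathrm{low}}$. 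Thus $\mathcal I_B\subset\Xi_R^B$.

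The only delicate point is making sure the infimum in the hypothesis is over the same interval $(1,\sigma_B-1)$ that appears in the definition of $\Xi_R^B$; otherwise the near-minimizer $\omega_0$ could fall outside it. For this reason I would invoke the $\widetilde{\mathcal R}_B$ version of~\eqref{eq: sharpened criterion} explicitly, rather than using $\widetilde{\mathcal R}_B\subset\widetilde{\mathcal R}_0$ together with Lemma~\ref{lem: XiR nonempty0}. That lemma would only produce an interval inside $(1,\sigma_0-1)$, which may miss $(1,\sigma_B-1)$.
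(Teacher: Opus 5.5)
Your proposal is correct and takes essentially the same approach as the paper: restrict $F$ to $(1,\sigma_B-1)\subset(1,\sigma_0-1)$, use the $\widetilde{\mathcal R}_B$ form of \eqref{eq: sharpened criterion} (infimum over $(1,\sigma_B-1)$) to obtain $\omega_0$ with $F(\omega_0)<\kappa_R$, and extend this by continuity to an open interval $\mathcal I_B\subset\Xi_R^B$. One side remark is slightly loose: pointwise $F>0$ only gives $\inf F\ge 0$, not the strict positivity in \eqref{eq: sharpened criterion}, but that remark plays no role in your argument.
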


\begin{proof}[Proof of Lemma~\ref{lem: XiR nonempty}]

We argue as in the proof of Lemma~\ref{lem: XiR nonempty0}, restricting the function $F$ defined in~\eqref{eq: Fw} to the interval $(1,\sigma_B-1)$. Since $\sigma_B\le \sigma_0$, this interval is contained in $(1,\sigma_0-1)$, and hence the positivity of $F$ proved in
Lemma~\ref{lem: XiR nonempty0} still holds here.

Because \(R\in\widetilde{\mathcal R}_B\), condition~\eqref{eq: kappaR} gives
\[\kappa_R>\inf_{\omega\in(1,\sigma_B-1)}F(\omega).\]
Thus there exists $\omega_0\in(1,\sigma_B-1)$ such that $F(\omega_0)<\kappa_R$. By continuity of $F$, this inequality holds on a nonempty open interval $I_B\subset(1,\sigma_B-1)$. Therefore $\mathcal I_B\subset\Xi_R^B$, completing the proof.
\end{proof}

We can now conclude the proof of Theorem~\ref{thm: chaos}.

\begin{proof}[Proof of Theorem~\ref{thm: chaos}]
By Lemma~\ref{lem: XiR nonempty}, the set $\Xi_R^B$ contains a nonempty open interval $\mathcal I_B\subset(1,\sigma_B-1)$. Choose $\omega_*\in\mathcal I_B\cap(\mathds R\setminus\mathds Q)$. By Proposition \ref{prop: rotation number c}, there exists a compact minimizing $\mathcal P_c$-invariant set $M_{\omega_*,c}$ with rotation number $\omega_*$. Since $\omega_*$ is irrational, $M_{\omega_*,c}$ is either contained in an invariant Lipschitz graph or is a Cantor set. Since Proposition~\ref{prop: no invariant Lipschitzc} applies to $\omega_*$, there exist $c_0>0$ such that for every $c\in(0,c_0)$, the set $M_{\omega_*,c}$ is a Cantor set.  

Therefore Theorem~\ref{thm: CKAM-forni} yields an invariant ergodic probability measure $\mu_{\omega_*}$ for $\widetilde{\mathcal P}_c$ with rotation number $\omega_*$ and positive metric entropy. In particular, $\mathcal P_c$ has positive topological entropy for every $c\in(0,c_0)$.

This completes the proof of Theorem~\ref{thm: chaos}.
\end{proof}

\appendix
\section{Aubry-Mather theory for exact symplectic twist maps}\label{app:Aubry-Mather}

We recall the variational formulation for exact symplectic twist maps via generating functions and the basic structure of minimizing invariant sets. We refer to \cite{Bangert1988,MatherForni1994} for classical sources and to the appendix of \cite{BonannoMaro2022} for a presentation close to the impact setting.

\begin{lemma}\label{lem: extension}
Let $\Omega \subset \mathds{R}^2$ be an open strip of the form
\[\Omega = \{(t_0,t_1) \in \mathds{R}^2 : a < t_1 - t_0 < b\},\]
with $a < b$. Let $h : \Omega \to \mathds{R}$ be a $C^2$ function such that
\begin{itemize}
\item[\textup{(i)}] $h(t_0+1,t_1+1) = h(t_0,t_1)$ for all $(t_0,t_1)\in\Omega$;
\item[\textup{(ii)}] $\partial_{t_0 t_1} h(t_0,t_1) < 0$ for all $(t_0,t_1)\in\Omega$.
\end{itemize}
Let $\Omega' = \{(t_0,t_1) : a' < t_1-t_0 < b'\}$ with $a<a'<b'<b$, and choose $a'',b''$ with $a<a''<a'<b'<b''<b$. Then there exists a $C^2$ function $\widetilde h:\mathds R^2\to\mathds R$ such that:
\begin{enumerate}
\item $\widetilde h(t_0+1,t_1+1)=\widetilde h(t_0,t_1)$ for all $(t_0,t_1)\in\mathds R^2$;
\item $\partial_{t_0t_1}\widetilde h(t_0,t_1)<0$ for all $(t_0,t_1)\in\mathds R^2$;
\item $\widetilde h=h$ on $\Omega'$;
\item there exists $\lambda>0$ such that $\widetilde h(t_0,t_1)=\frac{\lambda}{2}(t_1-t_0)^2$ whenever $t_1-t_0\notin(a'',b'')$.
\end{enumerate}
In particular, the implicit relations
\[y_0=\partial_1\widetilde h(t_0,t_1),\qquad y_1=-\partial_2\widetilde h(t_0,t_1)\]
define a global exact symplectic twist diffeomorphism on $\mathds T\times\mathds R$.
\end{lemma}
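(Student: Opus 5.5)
The plan is to build $\widetilde h$ by interpolating, in the variable $\tau=t_1-t_0$, between $h$ and a quadratic model $\frac{\lambda}{2}\tau^2$. The interpolation is arranged so that the mixed derivative $\partial_{t_0t_1}$ stays negative. Property 1 is then automatic, because every ingredient depends either on $(t_0,t_1)$ diagonally periodically or on $\tau$ alone. Note first that for $g(\tau)$ depending only on $\tau$ we have $\partial_{t_0t_1}g=-g''(\tau)$. So $\frac{\lambda}{2}\tau^2$ has mixed derivative $-\lambda<0$, and convex functions of $\tau$ are the natural "safe" building blocks.

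Choose a $C^\infty$ cutoff $\chi=\chi(\tau)\in[0,1]$ with $\chi\equiv1$ on $[a',b']$ and $\chi\equiv0$ outside $(a'',b'')$, and set
\[\widetilde h=\chi h+(1-\chi)\,\tfrac{\lambda}{2}\tau^2 .\]
Properties 3 and 4 then hold by construction. The $C^2$ regularity on $\mathds R^2$ is clear, since $\chi h$ vanishes near the boundary of the strip $[a'',b'']$, which lies strictly inside $\Omega$. On the compact set $K=\{a''\le\tau\le b''\}$ modulo the diagonal $\mathds Z$-action, $h$ is $C^2$ and $\partial_{12}h\le-\mu<0$ for some $\mu>0$.

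The key computation, and the main obstacle, is the sign of $\partial_{t_0t_1}\widetilde h$ in the transition region. Writing $\partial_{t_0}\tau=-1$ and $\partial_{t_1}\tau=1$, we get
\[\partial_{12}\widetilde h=\chi\,\partial_{12}h-\lambda(1-\chi)+\chi'\,(\partial_{t_0}h-\partial_{t_1}h)-\chi''\big(h-\tfrac{\lambda}{2}\tau^2\big)-2\lambda\tau\chi'\,\cdot(\text{sign terms}).\]
The terms involving $\chi',\chi''$ are not sign-controlled. The first two terms contribute at most $-\min(\mu,\lambda)$, but the error terms contain $h$ and $\nabla h$, which need not be small compared with $\lambda$. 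Making $\lambda$ large does not help directly, because $\lambda\tau^2$ also enters the $\chi''$ term.

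To handle this, I would first normalize $h$. Replace $h$ by $h-\ell$, where $\ell(\tau)$ is a suitable function of $\tau$ alone chosen so that $h-\ell$ and its first derivatives are small on $K$; the subtracted part will be added back as a convex function of $\tau$. Concretely, I would instead interpolate at the level of the mixed derivative. Set $g=-\partial_{12}h>0$ on $\Omega$, and let $\widetilde g=\chi g+(1-\chi)\lambda$, which is positive everywhere. Then reconstruct $\widetilde h$ by integrating $-\widetilde g$ twice, matching the boundary data of $h$ on a fixed curve in $\Omega'$, for instance taking $\widetilde h-h$ to depend only on $\tau$. Writing $\widetilde h=h+\psi(\tau)$ on $\{a''<\tau<b''\}$ requires $\psi''=(1-\chi)(\lambda-g)$, and this only works if $g$ depends on $\tau$ alone, which it need not.

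So the fallback is the more robust approach of the classical sources (Bangert, Mather–Forni). In the transition intervals $[a'',a']$ and $[b',b'']$, first add to $h$ a term $\psi(\tau)$ with $\psi''\ge0$ large, $\psi\equiv0$ on $[a',b']$. This makes $-\partial_{12}(h+\psi)=g+\psi''$ as large as desired in the transition zone. Then glue with the cutoff as above. In the gluing formula the uncontrolled terms are bounded by a constant $C$ depending on $\|h\|_{C^2(K)}$, $\chi$, and $\lambda$, while the good term $g+\psi''$ dominates once $\psi''\ge C+1$ there. The constant $\lambda$ should be chosen after $\psi$, equal to $\psi''$ at the outer ends, so that $h+\psi$ matches $\frac{\lambda}{2}\tau^2$ up to a function of $\tau$ and the interpolation is consistent.

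Finally, global twist with quadratic tails gives uniform bounds $-\partial_{12}\widetilde h\ge\delta>0$ and $\|\widetilde h\|_{C^2}$ bounded. Then for fixed $t_0$ the map $t_1\mapsto\partial_1\widetilde h(t_0,t_1)$ is a diffeomorphism of $\mathds R$ onto $\mathds R$, so the implicit relations define a global exact symplectic twist diffeomorphism of $\mathds T\times\mathds R$.
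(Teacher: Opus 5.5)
The decisive step of your fallback is circular, so there is a genuine gap. The paper states this lemma without proof, so there is no argument there to lean on.

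With $\widetilde h=\chi(h+\psi)+(1-\chi)\tfrac{\lambda}{2}\tau^2$ one gets
\[
\partial_{12}\widetilde h=\chi(\partial_{12}h-\psi'')-(1-\chi)\lambda+\chi'\bigl(\partial_1h-\partial_2h-2\psi'+2\lambda\tau\bigr)-\chi''\bigl(h+\psi-\tfrac{\lambda}{2}\tau^2\bigr).
\]
The terms you call uncontrolled therefore contain $\psi$, $\psi'$ and $\lambda$ on the support of $\chi'$. Your constant $C$ is not independent of $\psi$, and ``$\psi''\ge C+1$'' cannot be reached by enlarging $\psi$.

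Put differently, $-\partial_{12}\widetilde h=-\partial_{12}(\chi h)+\Psi''$, where $\Psi:=\chi\psi+(1-\chi)\tfrac{\lambda}{2}\tau^2$ depends on $\tau$ alone. Enlarging $\psi$ and $\lambda$ can only help if $\Psi$ is convex on $[a'',a']$. When $a''>0$ this is impossible for every $\lambda>0$, because $\Psi'(a'')=\lambda a''>0=\Psi'(a')$. The case $a''>0$ is exactly the paper's situation, where $a=0$. Choosing $\lambda=\psi''$ at the ends changes nothing.

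This failure is not specific to your construction: item 4 is false as written. Use coordinates $t_0=s-\tau/2$, $t_1=s+\tau/2$, in which $-\partial_{12}f=(\partial_\tau^2-\tfrac14\partial_s^2)f$, and set $\bar f(\tau):=\int_0^1 f\,ds$. Diagonal periodicity kills the $\partial_s^2$-term, so items 1--2 give $\bar{\widetilde h}''>0$ on $\mathds R$. Items 3--4 then force $\lambda a''<\bar h'(\tau)<\lambda b''$ for $\tau\in(a',b')$.

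Now take $h=1/(t_1-t_0)$ on $\{0<\tau<b\}$, which is the paper's $h_0$ with $R\equiv 1/\sqrt2$. It satisfies (i)--(ii), yet $\bar h'=-1/\tau^2<0<\lambda a''$, a contradiction.

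What is true, and is all the paper needs, is the version with affine tails: $\widetilde h=\tfrac{\lambda}{2}\tau^2+\mu_\pm\tau+\nu_\pm$ for $\tau\le a''$, resp.\ $\tau\ge b''$. Your convex-correction idea proves this version if the correction sits outside the cutoff and only the oscillating part $w:=h-\bar h$ is cut off.

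Let $\widetilde h=\Phi(\tau)+\chi(\tau)\,w$, with $\chi=1$ near $[a',b']$ and $\operatorname{supp}\chi\subset(a'',b'')$. Take $\Phi\in C^2$ with:
\begin{itemize}
\item $\Phi=\bar h$ on $[a',b']$ and $\Phi''>0$ everywhere;
\item $\Phi''\ge\bar h''$ on $\operatorname{supp}\chi$, and $\Phi''\ge\bar h''+C+1$ where $0<\chi<1$;
\item $\Phi''=\lambda$ outside $(a'',b'')$.
\end{itemize}
Here $C:=\sup|2\chi'\partial_\tau w+\chi''w|$ now depends only on $h$ and $\chi$. Then, using $\bar h''>0$,
\[
-\partial_{12}\widetilde h=\Phi''-\chi\bar h''+\chi(-\partial_{12}h)+2\chi'\partial_\tau w+\chi''w>0
\]
everywhere. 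Moreover $\widetilde h=h$ on $\Omega'$, and the tails of $\Phi$ are automatically quadratic plus affine.
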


The following lemma is the standard bounded-deviation estimate for minimizing configurations in the Aubry-Mather theory (see \cite{Bangert1988,MatherForni1994}).

\begin{lemma}\label{lem: increment}
Let $(t_n)_{n \in \mathds{Z}}$ be a minimizing configuration for $\widetilde h$ with rotation number $\omega$. Then
\[\left| t_{n+1} - t_n - \omega \right| \leq 1 \qquad \forall n \in \mathds{Z}.\]
\end{lemma}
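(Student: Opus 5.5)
The proof rests on the classical Aubry non-crossing property of minimizers, which turns the configuration into the orbit of an order-preserving circle-map lift. The bound then follows from an elementary rotation-number argument. Throughout I work with the extension $\widetilde h$ of Lemma~\ref{lem: extension}. It is $C^2$, diagonally periodic, satisfies $\partial_{12}\widetilde h<0$ on all of $\mathds R^2$, and has quadratic tails. The twist condition is the only structural input the argument needs.

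\textbf{Step 1 (Aubry's crossing lemma).} Let $(t_n)$ be minimizing and fix $(k,l)\in\mathds Z^2$. Put $s_n:=t_{n+k}+l$. By diagonal periodicity of $\widetilde h$, $(s_n)$ is again minimizing. I will show that either $s_n<t_n$ for all $n$, or $s_n=t_n$ for all $n$, or $s_n>t_n$ for all $n$.

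First suppose the two configurations cross strictly between two consecutive indices, that is, $(s_n-t_n)(s_{n+1}-t_{n+1})<0$. Exchanging $\max$ and $\min$ on finite segments gives two admissible segments whose total action is at most the original total action. The identity
\[
\widetilde h(a,b)+\widetilde h(a',b')-\widetilde h(a,b')-\widetilde h(a',b)=\int_a^{a'}\!\!\int_b^{b'}\partial_{12}\widetilde h
\]
with $\partial_{12}\widetilde h<0$ makes the inequality strict at the crossing. This contradicts minimality.

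Next suppose the configurations touch, $s_n=t_n$ with $s\neq t$. Both are minimizers with the same endpoint values on a segment, and the segment with swapped pieces is also minimizing. It must therefore satisfy \eqref{eq: DEL}-type equations with the same $\widetilde h$. Since $\partial_{12}\widetilde h\neq0$, the equation at index $n$ determines $t_{n+1}$ from $(t_{n-1},t_n)$, so the two configurations coincide. This is the same argument as in \cite{Bangert1988,MatherForni1994}.

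I expect this step to be the main obstacle. The delicate point is the equality (touching) case and making the swap argument rigorous on finite segments with fixed endpoints. I would follow Bangert's treatment verbatim.

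\textbf{Step 2 (circle map).} By Step 1, the set $A:=\{t_n+m: n,m\in\mathds Z\}$ is totally ordered consistently with the shifts. Hence $g(t_n+m):=t_{n+1}+m$ is a well-defined, nondecreasing map on $A$ that commutes with $x\mapsto x+1$. The rotation number satisfies $\omega=\lim_n (t_n-t_0)/n$.

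\textbf{Step 3 (the bound).} Suppose some step is too long, $d:=t_{k+1}-t_k>\omega+1$. For any $n$, choose $m\in\mathds Z$ with $t_k+m\le t_n<t_k+m+1$. Monotonicity of $g$ gives $t_{n+1}\ge t_{k+1}+m$, and therefore
\[
t_{n+1}-t_n > t_{k+1}-t_k-1=d-1 \qquad\text{for every } n.
\]
Summing these inequalities gives $(t_n-t_0)/n>d-1>\omega$ for every $n\ge1$. Passing to the limit, $\omega\ge d-1$, which contradicts $d-1>\omega$.

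The lower bound is symmetric. Assume $t_{k+1}-t_k<\omega-1$ and use $t_k+m-1<t_n\le t_k+m$ to get $t_{n+1}-t_n<d+1<\omega$ for all $n$, which contradicts the definition of $\omega$ in the same way. Together the two cases give $|t_{n+1}-t_n-\omega|\le 1$ for all $n$.
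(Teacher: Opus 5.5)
Your Steps~2 and~3 are correct. Once you know that $(t_n)$ never crosses a translate $s_n=t_{n+k}+l$, the map $g$ is well defined and increasing, and your comparison argument gives $|t_{n+1}-t_n-\omega|\le 1$. The paper gives no proof of this lemma and simply cites \cite{Bangert1988,MatherForni1994}, where the argument runs along the same lines.

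\textbf{The gap is Step~1, and it is not a technicality about endpoints.} Suppose $s$ and $t$ cross exactly once. On every index segment $[a,b]$ containing that crossing, the endpoint values are in opposite order. So $\max(s,t)|_{[a,b]}$ and $\min(s,t)|_{[a,b]}$ have endpoints $(s_a,t_b)$ and $(t_a,s_b)$, or the reverse. These are admissible competitors for neither $s$ nor $t$, so the strict inequality from $\partial_{12}\widetilde h<0$ contradicts nothing.

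The exchange argument only works when the two configurations are in the same order at both ends of the segment, that is, when they cross at least twice. What you have sketched is Aubry's lemma that two distinct minimizers cross at most once. It is not the statement that a minimizer does not cross its translates.

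Your touching case has the same defect: the swapped configuration has mismatched endpoints unless there is a second crossing. Touching without crossing should instead be excluded directly. If $s_n=t_n$ and $s_{n\pm1}\le t_{n\pm1}$, the Euler--Lagrange equation at $n$, together with the strict monotonicity of $\partial_2\widetilde h$ and $\partial_1\widetilde h$ coming from $\partial_{12}\widetilde h<0$, gives $s_{n\pm1}=t_{n\pm1}$. Induction then gives $s=t$.

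\textbf{Excluding a single crossing needs an additional idea.}

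\emph{Irrational $\omega$.} Here it is short. Take $k>0$ and suppose $t_{i+k}+l>t_i$ for $i\le A$ and $t_{i+k}+l<t_i$ for $i\ge B$; the opposite orientation is symmetric. Telescoping gives $t_A-t_{A-nk}>-nl$ and $t_{B+nk}-t_B<-nl$. Since $t_n/n\to\omega$ as $n\to\pm\infty$, this yields $\omega=-l/k\in\mathds Q$, a contradiction.

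\emph{Periodic minimizers.} For $\omega=p/q$ and a $(q,p)$-periodic minimizer, $n\mapsto s_n-t_n$ is $q$-periodic. One crossing would therefore force infinitely many, contradicting the at-most-one-crossing lemma.

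These two cases are exactly what Proposition~\ref{prop: rotation numbers0} uses. For general minimizers of rational rotation number (the heteroclinic ones), non-crossing with translates needs Bangert's finer description of the minimal sets of rotation number $p/q$. The exchange argument alone does not give it.
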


\begin{theorem}[Aubry-Mather~\cite{Bangert1988}]\label{thm: Aubry-Mather}
Let $T$ be an exact symplectic twist map. For every $\omega\in\mathds R$ there exists a nonempty compact $T$-invariant set $M_\omega$ formed by minimizing orbits with rotation number $\omega$. 
\begin{itemize}
\item If $\omega=p/q$ is rational (in lowest terms), $M_\omega$ contains a $q$-periodic minimizing orbit.

\item If $\omega$ is irrational, then $M_\omega$ is an ordered set and is contained in a Lipschitz graph over $\mathds T$. More precisely, its projection to the base is either all of $\mathds T$ (invariant curve case) or a Cantor subset of $\mathds T$ (Cantor case).
\end{itemize}
\end{theorem}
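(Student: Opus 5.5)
This is the classical Aubry–Mather theorem. The plan is to follow Bangert's variational approach for the global exact symplectic twist map $T$ generated by a $C^2$ function $\widetilde h$. We use the properties guaranteed by Lemma~\ref{lem: extension}: diagonal periodicity, $\partial_{12}\widetilde h<0$ everywhere, and quadratic tails $\frac{\lambda}{2}(t_1-t_0)^2$ outside a compact range of $t_1-t_0$. The quadratic tails make $\widetilde h$ coercive and bounded below, with $\widetilde h(t_0,t_1)\to+\infty$ as $|t_1-t_0|\to\infty$. Via the correspondence of Proposition~\ref{prop: DEL orbit correspondence0}, orbits of $T$ are stationary configurations $(t_n)$ of the formal action $\sum_n \widetilde h(t_n,t_{n+1})$. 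A configuration is called minimizing if every finite segment minimizes the action among segments with the same endpoints.

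\emph{Rational case.} For $\omega=p/q$, minimize $W(t_0,\dots,t_q)=\sum_{n=0}^{q-1}\widetilde h(t_n,t_{n+1})$ over $t_q=t_0+p$. This reduces to a function on $\mathds R^{q}$ modulo the diagonal $\mathds Z$-action. Coercivity of $\widetilde h$ gives a minimum, and the minimizer solves the discrete Euler--Lagrange equation; extending it by $t_{n+q}=t_n+p$ gives a periodic orbit. Aubry's crossing lemma shows that this orbit is minimizing among all configurations and that it is well ordered, i.e.\ its translates do not cross. The proof of the crossing lemma uses $\partial_{12}\widetilde h<0$ through the inequality $\widetilde h(a,b)+\widetilde h(a',b')>\widetilde h(a,b')+\widetilde h(a',b)$ for $a<a'$, $b<b'$: swapping the tails of two crossing configurations strictly decreases the action. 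The ordering then gives the bounded-deviation estimate of Lemma~\ref{lem: increment}.

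\emph{Irrational case.} Take $p_k/q_k\to\omega$ and the corresponding well-ordered minimizers normalized so that $t^{(k)}_0\in[0,1]$. Lemma~\ref{lem: increment} gives uniform bounds on $t^{(k)}_n$ for each fixed $n$. A diagonal subsequence converges to a configuration $(t_n)$, and this limit is minimizing, well ordered and of rotation number $\omega$. Let $M_\omega$ be the closure of the set of such orbits and their translates. Compactness of $M_\omega$ in $\mathds T\times\mathds R$ follows from the uniform bounds together with the continuity of $y=\partial_1\widetilde h$.

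\emph{Graph property and dichotomy.} Well-orderedness makes the projection of $M_\omega$ to $\mathds T$ injective. Mather's Lipschitz estimate, derived from $\partial_{12}\widetilde h<0$ and the $C^2$ bounds of $\widetilde h$ on a compact strip, shows that the inverse of this projection is Lipschitz. Hence $M_\omega$ lies in a Lipschitz graph, which can be extended to all of $\mathds T$ by linear interpolation on the gaps. The restricted dynamics is then conjugate to an order-preserving circle map of irrational rotation number $\omega$. Denjoy-type arguments identify the minimal set of this map as either the whole circle or a Cantor set, and taking $M_\omega$ to be the minimal part gives the dichotomy.

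The main obstacle is the Lipschitz estimate on $M_\omega$. This is where the twist constant and uniform second-derivative bounds must be controlled quantitatively, using the fact that minimizers with nearby base points cannot cross.
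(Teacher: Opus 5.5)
Your sketch is correct and is the classical Aubry--Mather argument in Bangert's variational form. This is exactly what the paper relies on: it gives no proof of its own and simply cites \cite{Bangert1988}, and your restriction to generating functions with the properties of Lemma~\ref{lem: extension} matches the only setting ($\widetilde{\mathcal P}_0$, $\widetilde{\mathcal P}_c$) in which the theorem is used. Two small points: passing from a $(q,p)$-periodic minimizer to a globally minimizing configuration needs, besides the crossing lemma, the usual step showing it also minimizes among $(kq,kp)$-periodic configurations, followed by a gluing argument; and the final circle/Cantor dichotomy is just Poincaré's classification of minimal sets of circle homeomorphisms with irrational rotation number, so no Denjoy-type regularity argument is needed.
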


\section{Converse-KAM theory}\label{app:CKAM}

We collect the converse-KAM results used in Section~\ref{sec: chaos}.

Let $F$ be an exact symplectic twist map generated by a $C^2$ function $h$ with $h_{12} < 0$. Let $\Gamma$ be an invariant Lipschitz graph of $F$. The restriction of $F$ to $\Gamma$, projected onto the base, defines an orientation-preserving circle homeomorphism. We denote by $\varphi\colon \mathds R \to \mathds R$ a lift of this map. Thus, if $(x_n,y_n)$ is an orbit on $\Gamma$, then its base coordinates satisfy $x_{n+1} = \varphi(x_n)$ (the map $\varphi$ is also known as the Birkhoff map associated to $\Gamma$). We recall the following results.

\begin{lemma}\label{lemma: birk}
    For every invariant curve $\Gamma$ of $F$ there exists an increasing bi-Lipschitz homeomorphism $\varphi:\mathds R\rightarrow\mathds R$ such that $\varphi(x+1)=\varphi(x)+1$ and 
  \begin{equation}
    \label{dEL}
  h_2(\varphi^{-1}(x),x)+h_1(x,\varphi(x)) =0,
  \end{equation}
for every $x\in\mathds R$. Moreover, if $\varphi$ has rotation number $\omega$, then for all $n\in\mathds Z$ and all $x\in\mathds R$,
\begin{equation}\label{eq: bounded deviation varphi}
\left|\varphi^n(x)-x-n\omega\right|\leq1.
\end{equation}
\end{lemma}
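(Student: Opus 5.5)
We prove the two assertions of Lemma~\ref{lemma: birk} separately: first the existence of $\varphi$ together with the discrete Euler-Lagrange identity \eqref{dEL}, then the deviation bound \eqref{eq: bounded deviation varphi}.

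\textbf{Construction of $\varphi$.} Write $\Gamma=\{(x,\gamma(x))\}$ with $\gamma$ Lipschitz and $1$-periodic. Let $\varphi$ be the base component of the lift of $F|_\Gamma$, that is, $F(x,\gamma(x))=(\varphi(x),\gamma(\varphi(x)))$. By the implicit relations defining $F$, this means
\[\gamma(x)=h_1(x,\varphi(x)),\qquad \gamma(\varphi(x))=-h_2(x,\varphi(x)).\]
The twist condition $h_{12}<0$ makes $t_1\mapsto h_1(x,t_1)$ strictly decreasing, so $\varphi(x)$ is uniquely determined by $\gamma(x)$. The implicit function theorem together with the Lipschitz continuity of $\gamma$ then shows that $\varphi$ is Lipschitz. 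Since $F$ is a homeomorphism preserving $\Gamma$, $\varphi$ is a homeomorphism of $\mathds R$. Its inverse is obtained in the same way from $F^{-1}$ and the relation $\gamma(y)=-h_2(\varphi^{-1}(y),y)$, so $\varphi^{-1}$ is also Lipschitz.

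\textbf{Properties of $\varphi$ and \eqref{dEL}.} Diagonal periodicity of $h$ and periodicity of $\gamma$ give $\varphi(x+1)=\varphi(x)+1$ for a suitable choice of lift. To see that $\varphi$ is increasing, note that the twist map sends vertical lines to graphs which are monotone with respect to the base variable; equivalently, a decreasing $\varphi$ would force the circle map to reverse orientation, which is incompatible with $F$ being isotopic to the identity on the annulus. Evaluating the second relation above at the point $\varphi^{-1}(x)$ gives $\gamma(x)=-h_2(\varphi^{-1}(x),x)$. Equating this with $\gamma(x)=h_1(x,\varphi(x))$ yields \eqref{dEL} directly.

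\textbf{The deviation bound \eqref{eq: bounded deviation varphi}.} This is the standard estimate for lifts of circle homeomorphisms. The key observation is that $\varphi^n(x)-x$ is $1$-periodic in $x$ and $\varphi^n$ is increasing. Hence, for all $x,y$,
\[\bigl|(\varphi^n(x)-x)-(\varphi^n(y)-y)\bigr|<1:\]
it suffices to take $y\in[x,x+1)$, where monotonicity gives $\varphi^n(x)\le\varphi^n(y)<\varphi^n(x)+1$. Set
\[M_n=\max_x\,(\varphi^n(x)-x),\qquad m_n=\min_x\,(\varphi^n(x)-x),\]
so that $M_n-m_n<1$. Writing $\varphi^{kn}(x)-x$ as a telescoping sum of $k$ increments of the form $\varphi^n(y)-y$ gives
\[k\,m_n\le\varphi^{kn}(x)-x\le k\,M_n .\]
Dividing by $kn$ and letting $k\to\infty$ yields $m_n\le n\omega\le M_n$. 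Combining this with $M_n-m_n<1$ gives $|\varphi^n(x)-x-n\omega|<1$ for $n\ge0$. The case $n<0$ follows by applying the same argument to $\varphi^{-1}$, whose rotation number is $-\omega$.

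The main obstacle is the bi-Lipschitz and monotonicity part of the first step. There one must use carefully that $\Gamma$ is a graph and that $h_{12}$ is bounded away from zero on the relevant compact set, which is where the twist condition enters. The remaining steps are algebraic identities or classical facts about circle homeomorphisms.
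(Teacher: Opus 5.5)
The paper gives no proof of this lemma: Appendix~B recalls it as a standard fact of Aubry--Mather theory (the Birkhoff map of an invariant graph), so there is no in-paper argument to compare with. Your proof is correct and is the standard argument. You define $\varphi$ by $\gamma(x)=h_1(x,\varphi(x))$ and compare with $\gamma(x)=-h_2(\varphi^{-1}(x),x)$ to get \eqref{dEL}. The oscillation bound $M_n-m_n<1$ for lifts of degree-one circle homeomorphisms then gives \eqref{eq: bounded deviation varphi}, even with strict inequality.

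Two steps are argued more loosely than they need to be.

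\textbf{Monotonicity.} You do not need to appeal to isotopy or to images of vertical lines. $\varphi$ is continuous and injective on $\mathds R$ (injectivity comes from injectivity of the lifted map), so it is strictly monotone. The identity $\varphi(x+1)=\varphi(x)+1>\varphi(x)$, which you already have, then forces $\varphi$ to be increasing. Also, no ``choice of lift'' is involved. The implicit relation and $h_{12}<0$ determine $\varphi(x)\in\mathds R$ uniquely, and $\varphi(x+1)=\varphi(x)+1$ follows from this uniqueness together with diagonal periodicity of $h$ and periodicity of $\gamma$.

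\textbf{Lipschitz bound.} The implicit function theorem does not apply to a merely Lipschitz $\gamma$. Replace it by a mean-value estimate on $h_1(x',\varphi(x'))-h_1(x,\varphi(x))=\gamma(x')-\gamma(x)$, which gives
\[
|\varphi(x')-\varphi(x)|\le \frac{\mathrm{Lip}(\gamma)+\sup|h_{11}|}{\min|h_{12}|}\,|x'-x|,\qquad |x'-x|\le 1 .
\]
Here the extrema are taken over pairs $(s,\xi)$ with $\xi-s$ in a bounded interval. This set is compact modulo diagonal translations because $\varphi-\mathrm{id}$ is continuous and $1$-periodic. The same argument, using $h_{22}$ and the relation for $\varphi^{-1}$, bounds $\varphi^{-1}$. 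This is what your closing remark indicates; it just needs to replace the appeal to the implicit function theorem.
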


Denote by
\[a(x):=h_{22}(\varphi^{-1}(x),x)+h_{11}(x,\varphi(x)),\qquad b(x):=-h_{12}(\varphi^{-1}(x),x).\]
Then $b(x)>0$ by the twist property.

\begin{theorem}[\cite{Maro2020}]\label{thm: CKAM-maro}
Let $\Gamma$ be an invariant Lipschitz graph for an exact symplectic twist map with $C^2$ generating function $h$ and $h_{12}<0$, and let $a,b$ be defined as above. Let $\varphi$ be the corresponding Birkhoff map on $\Gamma$. Assume that there exist constants $B^\pm>0$ and $C^\pm>0$ such that
\begin{equation*}
\begin{aligned}
B^+\geq \sup_{x\in\mathds R}\frac{a(x)}{b(\varphi(x))},\quad B^-\geq \sup_{x\in\mathds R}\frac{a(x)}{b(x)},\quad C^+\leq \inf_{x\in\mathds R}\frac{b(\varphi(x))}{b(x)},\quad C^-\leq \inf_{x\in\mathds R}\frac{b(x)}{b(\varphi(x))},
\end{aligned}
\end{equation*}
and that $(B^\pm)^2-4C^\pm > 0$. Define
\[D^-:=\frac{B^- - \sqrt{(B^-)^2-4C^-}}{2C^-},\qquad D^+:=\frac{B^+ + \sqrt{(B^+)^2-4C^+}}{2}.\]
Then for every $x\in\mathds R$ one has
\[a(x) \ge b(\varphi(x))D^- + \frac{b(x)}{D^+}.\]
\end{theorem}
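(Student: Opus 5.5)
The statement to prove is the last one displayed: Theorem~\ref{thm: CKAM-maro}, the quantitative converse-KAM estimate. I plan to follow Mather's original argument and keep track of constants. On an invariant Lipschitz graph, minimality forces the second variation of the action along the orbits in $\Gamma$ to be nonnegative. This gives a positive solution of the discrete Jacobi (linearized Euler--Lagrange) equation.

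Concretely, differentiate \eqref{dEL} with respect to $x$ wherever $\varphi$ is differentiable, which is almost everywhere since $\varphi$ is bi-Lipschitz. This gives
\[h_{12}(\varphi^{-1}(x),x)\,(\varphi^{-1})'(x)+a(x)+h_{12}(x,\varphi(x))\,\varphi'(x)=0.\]
In terms of $b$, noting that $h_{12}(x,\varphi(x))=-b(\varphi(x))$, this reads
\[a(x)=b(x)\,(\varphi^{-1})'(x)+b(\varphi(x))\,\varphi'(x).\]
To avoid differentiability issues I would instead work with difference quotients, or with Mather's inequalities for upper and lower Dini derivatives. Set $\lambda(x):=\varphi'(x)$ and $\mu(x):=(\varphi^{-1})'(x)=1/\lambda(\varphi^{-1}(x))$. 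Then the identity is $a(x)=b(\varphi(x))\lambda(x)+b(x)/\lambda(\varphi^{-1}(x))$, with all terms positive.

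The key step is a two-sided a priori bound on $\lambda$, obtained by iterating these identities along the orbit. From the identity, $b(\varphi(x))\lambda(x)\le a(x)$, so $\lambda(x)\le a(x)/b(\varphi(x))\le B^+$. Also $b(x)/\lambda(\varphi^{-1}(x))<a(x)$, so $1/\lambda(\varphi^{-1}(x))\le B^-$.

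Next I refine these bounds by substitution. Write the identity at $x$ and use the bound on the previous ratio:
\[\lambda(x)=\frac{a(x)}{b(\varphi(x))}-\frac{b(x)}{b(\varphi(x))}\frac{1}{\lambda(\varphi^{-1}(x))}\le B^+-\frac{C^+}{\lambda(\varphi^{-1}(x))}.\]
Any uniform upper bound $U$ on $\lambda$ therefore improves to $B^+-C^+/U$. Iterating the map $U\mapsto B^+-C^+/U$ starting from $U_0=B^+$ gives a decreasing sequence. Because $(B^+)^2>4C^+$, it converges to the larger fixed point of $U^2-B^+U+C^+=0$, namely $D^+$. Hence $\lambda\le D^+$ everywhere.

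The symmetric argument applies to $1/\lambda(\varphi^{-1}(x))$. Using $B^-$ and $C^-$, the iteration $V\mapsto B^--C^-/V$ (suitably normalized) converges to the appropriate root. This shows $1/\lambda\ge D^-$ in the form needed, i.e.\ $\lambda(\varphi^{-1}(x))\le 1/D^-$ in the matching normalization. I expect to have to check carefully which root appears and where the factor $C^-$ enters; this produces the asymmetric formula defining $D^-$.

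Finally, substitute the bounds back into the identity. We have
\[a(x)=b(\varphi(x))\lambda(x)+\frac{b(x)}{\lambda(\varphi^{-1}(x))},\]
and I want to bound it from below. So lower bounds are needed: $\lambda(x)\ge D^-$ and $1/\lambda\ge 1/D^+$. The latter is exactly the upper bound $\lambda\le D^+$. The former comes from the mirror iteration, namely bounding $1/\lambda$ from above by the fixed point. Combining the two gives $a(x)\ge b(\varphi(x))D^-+b(x)/D^+$.

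\textbf{Main obstacle.} The main obstacle is making the derivative argument rigorous when $\varphi$ is only Lipschitz. I would handle it as Mather does, using upper and lower difference quotients $\limsup/\liminf$ of $(\varphi(x+\delta)-\varphi(x))/\delta$. From \eqref{dEL} and Taylor expansion of $h$, these satisfy the identity as inequalities, and the same fixed-point iteration applies to them uniformly in $x$.
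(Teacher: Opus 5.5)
The paper does not prove this theorem; it quotes it from \cite{Maro2020}. So I compare your proposal with the statement itself.

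Your strategy is the right one. Differentiating \eqref{dEL} gives $a(x)=b(\varphi(x))\varphi'(x)+b(x)/\varphi'(\varphi^{-1}(x))$, and then you trap $\varphi'$ between fixed points of monotone iterations. Regularity is not a real obstacle. A bi-Lipschitz $\varphi$ preserves null sets, so the identity holds along almost every full orbit and a.e.\ bounds on $\varphi'$ propagate under $\varphi^{\pm1}$. The final inequality then extends to every $x$ by continuity of $a$ and $b$.

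\textbf{The gap is in the constants}, which are the whole content of this quantitative statement. Your key step
\[\lambda(x)\le B^+-\frac{C^+}{\lambda(\varphi^{-1}(x))}\]
needs $b(x)/b(\varphi(x))\ge C^+$. The hypothesis only says $b(\varphi(x))/b(x)\ge C^+$; the lower bound actually available for $b(x)/b(\varphi(x))$ is $C^-$.

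The mirror step has two problems. First, it must run on an upper bound $V$ for $1/\varphi'$, because you need $\varphi'\ge D^-$, not $\varphi'\le 1/D^-$ as in your middle paragraph. Second, it involves $C^+$, not $C^-$:
\[\frac{1}{\varphi'(\varphi^{-1}(x))}=\frac{a(x)}{b(x)}-\frac{b(\varphi(x))}{b(x)}\,\varphi'(x)\le B^--\frac{C^+}{V}.\]
Carried out correctly, your argument gives, almost everywhere,
\[\frac{B^--\sqrt{(B^-)^2-4C^+}}{2C^+}\le\varphi'\le\frac{B^++\sqrt{(B^+)^2-4C^-}}{2}.\]
This is the theorem with $C^+$ and $C^-$ interchanged in the formulas for $D^\pm$. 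The relevant discriminants are then automatically nonnegative, since otherwise the decreasing iteration would force $\varphi'\le 0$.

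This cannot be fixed by rearranging the same recursion. The bound $\varphi'\le D^+$, with $D^+$ built from $(B^+,C^+)$, does not follow from the Jacobi identity and the four bounds along an orbit. Take $3$-periodic orbit data $(b_0,b_1,b_2)=(4,2,1)$ and $(\lambda_0,\lambda_1,\lambda_2)=(1,10,19)$, and set $a_n:=b_{n+1}\lambda_n+b_n/\lambda_{n-1}$. Then:
\[\max_n\frac{a_n}{b_{n+1}}=19.025,\qquad \max_n\frac{a_n}{b_n}=76.1,\qquad \min_n\frac{b_{n+1}}{b_n}=\tfrac12,\qquad \min_n\frac{b_n}{b_{n+1}}=\tfrac14,\]
and both discriminant conditions hold. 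Yet the larger root of $U^2-19.025\,U+\tfrac12$ is about $18.9987<19=\lambda_2$. With $C^-=\tfrac14$ instead, the larger root is about $19.0119$, consistent with the corrected bound.

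So your proof establishes the statement only when $C^+=C^-$. That is the only case the paper uses: Lemma~\ref{lem: Alow0} applies the theorem with $C^\pm=\widetilde C_0(\omega)$, so nothing downstream is affected. For $C^+\neq C^-$, you would need an ingredient beyond the orbitwise recursion. The pairing the recursion naturally delivers is $(B^+,C^-)$ and $(B^-,C^+)$.
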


\begin{theorem}[Forni~\cite{Forni1996}]\label{thm: CKAM-forni}
Let $T$ be an exact symplectic twist map and let $\omega\notin\mathds Q$. Assume that the Mather set $\mathcal M_\omega$ is not contained in an invariant continuous graph. Then there exists an invariant ergodic probability measure
$\mu$ with rotation number $\omega$ and positive metric entropy. In particular, $T$ has positive topological entropy.
\end{theorem}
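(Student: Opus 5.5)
The plan is to follow the variational strategy of Mather and Angenent. We construct, near the Cantor set $\mathcal M_\omega$, a compact invariant set on which the dynamics is semiconjugate to a full shift in a controlled way. Then we extract from it an ergodic measure with rotation number $\omega$ and positive entropy. Throughout we work with configurations $(t_n)_{n\in\mathds Z}$ and the action $\sum h(t_n,t_{n+1})$ for the generating function $h$ of $T$. By Theorem~\ref{thm: Aubry-Mather} and the Cantor assumption, $\mathcal M_\omega$ is ordered and its projection to $\mathds T$ has gaps. We fix one gap $(\xi^-,\xi^+)$, bounded by two neighbouring minimizers $\xi^\pm$ of rotation number $\omega$.

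\textbf{Step 1: no minimizer crosses the gap.} Using Aubry's crossing lemma, we first show that there is no minimizing configuration of rotation number $\omega$ passing through the gap. Consequently the Peierls barrier $P_\omega(x)$ is strictly positive for $x$ in the gap.

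\textbf{Step 2: connecting orbits.} Minimizing the action among configurations lying between $\xi^-$ and $\xi^+$ and asymptotic to them, we obtain heteroclinic orbits in both directions, $\xi^-\to\xi^+$ and $\xi^+\to\xi^-$. A mountain-pass argument between the two order-related minimizers then gives a second, minimax, heteroclinic orbit. This is Mather's construction of connecting orbits.

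\textbf{Step 3: a shift from broken geodesics.} Choose a long block length $N$. For each word $s\in\{-,+\}^{\mathds Z}$, minimize the action over configurations constrained to lie in the ordered band $[\xi^-,\xi^+]$ and to pass alternately "near $\xi^{s_k}$" at times $kN$. The positivity of the barrier from Step 1 supplies a definite energy gap. Using this gap, one shows that for $N$ large the constrained minimizers lie in the interior of the constraint, so they are genuine orbits of $T$. Distinct words then give distinct orbits, which yields an invariant compact set $\Lambda$ and a continuous injective coding onto $\{-,+\}^{\mathds Z}$ that intertwines $T^N$ with the shift. Hence $h_{top}(T)\ge \log 2/N>0$, which already proves the second conclusion of the theorem.

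\textbf{Step 4: measures with rotation number $\omega$ and positive entropy.} Every orbit in $\Lambda$ is trapped between $\xi^-$ and $\xi^+$ up to bounded shifts, so it has rotation number exactly $\omega$. It follows that every invariant measure on $\Lambda$ has rotation number $\omega$. Pulling back the Bernoulli$(1/2,1/2)$ measure through the coding, and taking an ergodic component of its $T$-average over one period, gives an ergodic measure $\mu$ with rotation number $\omega$ and $h_\mu(T)\ge \log 2/N>0$.

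\textbf{Main obstacle.} The hardest part is Step 3, namely showing that the constrained minimizers do not touch the constraints. This requires quantitative estimates comparing the action of broken configurations with the barrier $P_\omega$. I would first try Mather's technique: modify a configuration near a constraint point by a local ordered cut-and-paste with $\xi^\pm$, and show that this strictly lowers the action by a definite amount coming from $P_\omega>0$. The difficulty is that this gain must dominate the error terms, which decay with $N$ only because of the asymptotic convergence of the connecting orbits from Step 2.
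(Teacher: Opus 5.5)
The paper gives no proof of this statement. It quotes it from Forni and uses it as a black box, so your sketch has to stand on its own, and as written it does not. Steps 2--4 transplant the rational-case mechanism (Bangert's heteroclinics between neighbouring periodic minimizers, Angenent's horseshoe) to a setting where it degenerates.

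For irrational $\omega$, the two minimizers $\xi^\pm$ bounding a gap are asymptotic to each other in both time directions. The intervals $(\xi^-_n+j,\xi^+_n+j)$, $(n,j)\in\mathds Z^2$, are gaps of the projected Aubry--Mather set. They are pairwise distinct, because an element of the ordered set of minimizers is determined by its $0$-th coordinate and $\omega\notin\mathds Q$. Hence they are disjoint modulo $1$, so $\sum_n(\xi^+_n-\xi^-_n)\le 1$ and $\xi^+_n-\xi^-_n\to0$ as $|n|\to\infty$. This has three consequences:
\begin{enumerate}
\item The ``heteroclinics'' of Step 2 carry no information, since every configuration in the band $[\xi^-,\xi^+]$ is asymptotic to both $\xi^-$ and $\xi^+$.
\item The constraints ``near $\xi^-$'' and ``near $\xi^+$'' at times $kN$ cannot be told apart uniformly in $k$.
\item The band is not $T^N$-invariant: $T^N$ maps it to the band over the different gap $(\xi^-_N,\xi^+_N)$. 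So no coding can intertwine $T^N$ with the shift.
\end{enumerate}
Most decisively, every orbit confined to the band is forward and backward asymptotic to $\mathcal M_\omega$. So the closure of such orbits has all its recurrent points in $\mathcal M_\omega$. By Poincar\'e recurrence, every invariant probability measure on it lives on $\mathcal M_\omega$. There $T$ is conjugate to the restriction of a circle homeomorphism, hence uniquely ergodic with zero entropy. Step 4 therefore cannot produce positive entropy.

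The missing idea is that the orbits must leave the band. They have to cross elements of $\mathcal M_\omega$ and pass through the interiors of many different gaps $(\xi^-_k+j,\xi^+_k+j)$:
\begin{itemize}
\item the freedom in choosing which gaps to visit produces the entropy;
\item the positivity of the barrier keeps constrained minimizers off the constraints;
\item the choices must be arranged so that the rotation number stays exactly $\omega$.
\end{itemize}
This is what Mather's construction of new Denjoy minimal sets and Forni's construction are built to do, and it is where the real work lies.

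Two further remarks. First, in Step 1, Aubry's crossing lemma only gives total order. It does not exclude non-recurrent minimizers inside a gap of the Cantor set. To find an interval with $P_\omega>0$ you need Mather's characterization of the zero set of $P_\omega$ as the projection of the set of all minimizers. That set is not all of $\mathds T$ when there is no invariant circle.

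Second, your Steps 2--3 are essentially Angenent's argument for rational rotation numbers, and there they do work. The set of rotation numbers of invariant circles is closed (Birkhoff's uniform Lipschitz bound), so some $p/q$ near $\omega$ has no invariant circle. Angenent's argument then gives $h_{\mathrm{top}}(T)>0$. This recovers the final clause of the theorem, but only with measures of rotation number $p/q$, not $\omega$.
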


\printbibliography
\end{document}